\newtheorem{thm}{Theorem}[section]
\newtheorem{lem}[thm]{Lemma}
\newtheorem{ex}{Example}[section]
\newtheorem{defi} {Definition}
\title { Control Problems and Invariant Subspaces for the Sabra Shell Model of Turbulance}
\author
{Tania Biswas and Sheetal Dharmatti}
\begin{document}
\maketitle

\centerline{\scshape Tania Biswas} 
\medskip
{\footnotesize
 \centerline{School of Mathematics, IISER Thiruvananthapuram}
   \centerline{Computer Science Building, CET Campus}
   \centerline{ Trivandrum, Kerala, India 695016}
 \centerline{ tania9114@iisertvm.ac.in}
} 

\medskip

\centerline{\scshape Sheetal Dharmatti } 
\medskip
{\footnotesize
 \centerline{School of Mathematics,  IISER Thiruvananthapuram}
   \centerline{Computer Science Building, CET Campus}
   \centerline{Trivandrum, Kerala, India 695016}
 \centerline{ sheetal@iisertvm.ac.in}
}


\begin{abstract}
Shell models of turbulence are  representation of turbulence equations in Fourier domain.  Various shell models and their existence theory along with numerical simulations have been studied earlier. In this work we study control problems related to sabra shell model of turbulence. We associate two cost functionals: one ensures minimizing turbulence in the system  and the other addresses the need of taking the flow near a priori known state. We derive optimal controls in terms of the solution of adjoint equations for corresponding linearized problems. In this work, we  also establish feedback controllers which would preserve prescribed physical constraints.  Since fluid equations have certain fundamental invariants, we would like to preserve these quantities via a control in the feedback form. We utilize the theory of nonlinear semi groups and represent the feedback control as  a multi-valued feedback term which lies in the normal cone of the convex constraint space under consideration.
\end{abstract}
\maketitle
\section{Introduction}
Mathematical modeling of turbulence is very complicated. Various theories and models are proposed in \cite{bohr}, \cite{donough}, \cite{frisch}, \cite{kandanoff}. Turbulent flows show large interactions at local levels/nodes. Hence it is suitable to model them in frequency domain or commonly known as Fourier domain. Shell models of turbulence are simplified caricatures of equations of fluid mechanics in wave-vector representation. They exhibit anomalous scaling and local non-linear interactions in wave number space. We would like to study control problem related to one such widely accepted shell model of turbulence  known as sabra shell model. 

Shell models are well known as they retain certain features of Navier Stokes Equations. The spectral form of Navier Stokes Equations motivated people to study shell models. But, unlike spectral model of Navier Stokes Equations, shell models contain local interaction between the modes, that is interaction in the short range which is important in turbulent phenomena. Several shell models have been proposed in literature.  The form of the governing equations is derived by the necessity that the helicity and energy are conserved as in the case of Navier Stokes Equations.  The most popular and well studied shell model was proposed by Gledzer and was investigated numerically by Yamada and Okhitani, which is referred as the Gledzer – Okhitani – Yamada or GOY model in short \cite{gledzer}, \cite{yamada}. The numerical experiments performed by them showed that the model exhibits an enstrophy cascade and chaotic dynamics. This garnered lot of interest in the study of shell models and many papers investigating shell models have been published since then. For more details about the shell models, we refer to \cite{Peter}.  

In this work we consider a model known as sabra shell model, introduced in \cite{L'vov et al}. To derive the form of sabra shell model, the usual construction of local interactions in $k$-space, inviscid conservation of energy and fulfillment of Liouville's theorem are used apart from the demand that the momenta involved in the triad interactions must add up to zero. The main difference of this model with respect to the GOY model lies in the number of complex conjugation operators used in the nonlinear terms. As shown in \cite{L'vov et al}, this slight change, is responsible for a difference in the phase symmetries of the two models. The sabra shell model exhibits shorter ranged correlations than the GOY model.
Apart from this difference all calculations for GOY model remains similar to the calculations for sabra shell model. Moreover, both models also share the same quadratic invariants.  Thus the results obtained in this work are equally applicable to GOY model.

  To describe the sabra shell model, the spectral spaces are divided into concentric spheres of exponentially growing radius,
\begin{equation}\label{e2}
k_n=k_0\lambda ^n
\end{equation}
with $\lambda>1$ and $k_0>0$. The one dimensional wave numbers are  denoted by $k_n$'s. The set of wave numbers contained in the $n$th sphere is called $n$th shell and $\lambda$ is the shell spacing parameter. The spectral velocity  $u_n$ is a kind of mean velocity, of the complex Fourier co-efficients of the velocity in the $n$th shell.

 The equations of motion of the sabra shell model of turbulence are given in the following form
\begin{equation}\label{e1}
\frac{du_{n}}{dt}=i(ak_{n+1}u_{n+2}
  u_{n+1}^*+bk_nu_{n+1}u_{n-1}^*-ck_{n-1}u_{n-1}u_{n-2})-\nu k_n^2u_n+f_n
\end{equation}
for $n=1,2,3...$. The boundary conditions are $u_{-1}=u_0=0$. The kinematic viscosity is represented by $\nu>0$ and $f_n$'s are the Fourier components of the forcing term. The nonlinear term defines the nonlinear interaction between the nearest nodes. The constants $a,b,c$ are chosen such that the energy and enstrophy is conserved which gives the relation $a+b+c=0$. In \cite{PBT}, Constantin, Levant and Titi have studied this model analytically and have proved the global  regularity of solutions. They rewrite an abstract formulation  of the model in $ l^2$ space and have obtained existence  and uniqueness of the strong and weak solutions for the equations in appropriate spaces.

In  \cite{PBT 1} the same authors have further studied the global existence of weak solutions of the inviscid sabra shell model and have shown that these solutions are unique for some short interval of time. Moreover, they  give a Beal-Kato-Majda type criterion for the blowup of solutions of the inviscid sabra shell model and show the global regularity of the solutions in the “two-dimensional” parameters regime.
 
Control problems associated with turbulence equations in general and shell models in particular, have not been studied widely. To our knowledge, there are no known results for the control problems associated with shell models of turbulence. In the current work our aim is to study optimal control problems associated with the sabra shell model. For, we assume the control to be acting as a forcing term in the equation and try to minimize the associated cost functional. 

 Various optimal control problems are  studied in literature for Navier Stokes Equations  and other nonlinear fluid flow problems see \cite{Tem}, \cite{sri book}, \cite{raymond} and references therein.  Here, we  study  control problems for shell model of turbulence similar to the one studied for Navier Stokes Equations.   We  consider two control problems with two different cost functionals: in the first one  aim is to minimize the vorticity in the flow which is equivalent to minimizing the turbulence. Whereas the other one aims to find a control which can steer the flow close to the a priori known desired state.

 Another important problem studied in this work is about preserving prescribed physical constraints of the model via feedback controllers. The flow equations have some invariant quantities, which are preserved, eg: energy, helicity  and enstrophy. In engineering applications, the requirement can be of ensuring the enstrophy in a specific spatial region to be kept within a bound. We wish to find a control in a feedback form which can fulfill such a requirement. For Navier Stokes Equations, flow preserving feedback controllers are studied by Barbu and Sritharan in \cite{BS}. They have shown that the controllers in the feedback form lie in the appropriate normal cone to the state space. Since the  sabra  shell model has flow preserving quantities  like helicity and enstrophy, we wish to investigate similar questions for this model. The question of finding feedback control can be looked upon as an  optimization problem.  For operator equations in abstract Hilbert spaces, the  problem of finding best  interpolation from a convex subset  has been studied and shown to be in the normal cone of the convex subset. Various other optimization problems studied in \cite{hil}, \cite{hil 1}, \cite{hil 2}, \cite{hil 3}, \cite{hil 4} give similar results. Thus even for our problem, it is suitable to look for feedback controller in the normal cone of the convex constraint space we are working with.

The paper is organized as follows: We discuss the functional setting of the problem, important properties of the operators involved and the existence result in the next section. This section has resemblance with \cite{PBT} and \cite{PBT 1}, as we are reiterating the results of these two papers. Section 3 is devoted to the study of two control problems where, we characterize the optimal control using the adjoint equation. In section 4 we prove three important theorems about flow preserving feedback controllers. At the end of this section, we demonstrate, how these theorems can be used to determine the feedback  controls, with the help of examples. We conclude the paper by summarizing our results  and list few interesting problems which can be studied further.



\section{Functional setting}
To simplify notations, we look upon $\{u_n\}$ as an element of $H=l^2(\mathbb{C} )$ and rewrite the equation \eqref{e1} in the following functional form by appropriately defining operators $A$ and $B$, 
\begin{equation} \label{e3}
\frac{du}{dt}+\nu Au+B(u,u)=f \ u(0)=u^0.
\end{equation}
For defining operators $A$ and $B$ we introduce certain functional spaces below.
For every $ u, v \; \in {H}$ the scalar product $(\cdot,\cdot)$ and the corresponding norm $|\cdot|$ are defined as,
\begin{equation*}
(u,v)=\sum\limits_{n=1}^{\infty} u_nv_n^*,\ |u|=(\sum\limits_{n=1}^{\infty} |u_n|^2)^{\frac{1}{2}}.
\end{equation*}
Let $(\phi_j)_{j=1}^{\infty}$ be the standard canonical orthonormal basis of $H$. The linear operator $A:D(A)\rightarrow H$ is defined through its action on the elements of the canonical basis of $H$ as
\begin{equation*}
A\phi_j=k_j^2\phi_j
\end{equation*}
where the eigenvalues $k_j^2$ satisfy relation \eqref{e2}. The domain of $A$ contains all those elements of $H$ for which  $| Au| $ is finite. It is denoted by  $ D(A) $ and is a dense subset of $H$. Moreover, it is a Hilbert space when equipped with graph norm 
\begin{equation}
\|u\|_{D(A)}= |Au| \ \ \forall u\in D(A). \nonumber
\end{equation}

The bilinear operator $B(u,v)$ will be defined in the following way. Let $u,v\in H$ be of the form  $u=\sum_{n=1}^{\infty}u_n\phi_n$ and  $v=\sum_{n=1}^{\infty}v_n\phi_n$.
 Then,
\begin{equation*}
B(u,v)=-i\sum_{n=1}^{\infty}(ak_{n+1}v_{n+2}
 u_{n+1}^*+bk_nv_{n+1}u_{n-1}^*+ak_{n-1}u_{n-1}v_{n-2}+bk_{n-1}v_{n-1}u_{n-2})\phi_n.
\end{equation*}
With the assumption $u^0=u_{-1}=v_0=v_{-1}=0$ and together with the energy conservation condition $a+b+c=0$, we can simplify and rewrite $B(u,v)$ as
$$B(u,u)  = -i\sum_{n=1}^{\infty}(ak_{n+1}u_{n+2}
 u_{n+1}^*+bk_nu_{n+1}u_{n-1}^*-ck_{n-1}u_{n-1}u_{n-2})\phi_n$$

With above definitions of $A$ and $B$, \eqref{e1} can be written in the form $$\frac{du}{dt} +\nu  A u + B(u,u) = f ; \; u_0 = u^0.$$  We now give some properties of $A$ and $B$.

Clearly, $A$ is positive definite, diagonal operator. Since $A$ is a positive definite operator, the powers of $A$ can be defined for every $s\in \mathbb{R}.$ 
For $ u=(u_1,u_2,...) \in H, $  define $ A^su=(k_1^{2s}u_1,k_2^{2s}u_2,...)$.\\
Furthermore we define the spaces  
$$V_s:=D(A^\frac{s}{2})={\{u=(u_1,u_2,...) \ \vert \ \sum\limits_{j=1}^{\infty} k_j^{2s}|u_j|^2<{\infty}\}}$$\\
which are Hilbert spaces equipped with the following scalar product and norm,\\
$$(u,v)_s=(A^{s/2}u,A^{s/2}v) \ \forall u,v \in V_s, \; \|u\|=(u,u)_s \ \ \forall u \in V_s.$$ 
Using above definition of the norm we can show that  $V_s \subset V_0=H \subset V_{-s}  \ \ \forall s>0$. Moreover, it can be shown that the dual space of $ V_s $ is given by $ V_{-s} $.
Domain of $A^{1/2} $ is denoted by  $V$ and is  equipped with scalar product $((u,v))=  (A^{1/2}u,A^{1/2}v) \ \ \forall u,v \in D(A^{1/2})$.  Thus we get the  inclusion 
 $$V \subset H=H' \subset V,' $$
where $ V^\prime$, the dual space of $V$ which is  identified with  $D(A^{-1/2})$.  We denote by $ \langle\cdot,\cdot\rangle$ the action of the functionals from $ V' $ on the elements of $ V $. 
Hence for every  $ u\in V $, the $H$ scalar product of $f \in H$ and $ u\in V $ is  same as the action of $f$ on $u$ as a functional in $V'$.
$$_{V'}\langle f,u\rangle_V = (f,u)_H \ \ \forall f \in H, \ \forall u \in V.$$ 
So for every $u\in D(A)$ and for every $ v \in V$, we have $((u,v))=(Au,v)= \langle u,v \rangle$ .\\
Since $D(A)$ is dense in $ V$ we can extend the definition of the operator $A:V \longrightarrow V' $ in such a way that $\langle Au,v \rangle = ((u,v)) \ \ \forall u,v \in V.$\\
In particular it follows that 
$$\|Au\|_{V'} = \|u\|_V \ \  \forall u \in V.$$

\begin{defi} \label{sfs}
Let us introduce the Sobolev functional spaces for $ m \in \mathbb{R} $
\begin{eqnarray*}
w^{m,p}&:=&\{u=(u_1,u_2,...)\  \vert \ \|A^{m/2}u\|_p =(\sum_{n=1}^{\infty}(k_n^m|u_n|)^p)^\frac{1}{p}<\infty\},\;1\leq p<\infty \nonumber \\ 
w^{m,\infty}&:=&\{u=(u_1,u_2,u_3...) \  \vert \ \|A^{m/2}u\|_\infty=(\sup(k_n^m|u_n|)<\infty)\},\; p=\infty 
\end{eqnarray*}
\end{defi}
For $u\in w^{m,p}$, the norm is defined as $$\|u\|_{w^{m,p}}=\|A^{m/2}u\|_p,$$ 
where  $\|\cdot\|_p$  is the usual norm in the  $l^p$  sequence space. $w^{m,2}$ is a Hilbert space with respect to the norm defined above. Clearly, $w^{1,2}=V$.\\
Also the inclusion $V\subset w^{1,\infty}$ is continuous (but not compact) because
\begin{align}
\|u\|_{w^{1,\infty}} &= \|A^{1/2}u\|_{\infty}  \nonumber 
\\&\leq \|A^{1/2}u\|_{2}  \nonumber 
\\&= \|u\|_V \nonumber
\end{align}

\begin{thm} \textbf{(Properties of bilinear operator $B$ )} \label{prop b}
\begin{enumerate}
\item $B:H\times V\longrightarrow H$ and $B:V\times H\longrightarrow H$ are bounded, bilinear operators. Specifically 
\begin{enumerate}
\item $|B(u,v)|\leq C_1|u|\|v\| \ \ \forall u\in H, v\in V$
\item  $ |B(u,v)|\leq C_2|v|\|u\| \ \ \forall u\in V, v\in H$
\end{enumerate} 
where \\
 $C_1=(|a|(\lambda^{-1}+\lambda)+|b|(\lambda^{-1}+1))$ \\ 
 $C_2=(2|a|+2\lambda|b|)$. 
 \item $B:H\times H\longrightarrow V'$ is a bounded bilinear operator and $\|B(u,v)\|_{V'}\leq C_1|u||v|$   $\forall u,v\in H.$
 \item $B:H\times D(A)\longrightarrow V$ is a bounded bilinear operator and for every $u\in H$ and $v\in D(A)$ \\
  $\|B(u,v)\|\leq C_3|u||Av|  \\
  C_3=(|a|(\lambda^{3}+\lambda^{-3})+|b|(\lambda+\lambda^{-2})).$ 
 \item For every $u\in H$ and $v\in V$, $Re(B(u,v),v)=0.$
 \item Let $u,v,w \in V.$ Denote $b(u,v,w)= \langle B(u,v),w \rangle$. Then 
 \begin{enumerate} 
 \item $b(u,v,w) = - b(v,u,w) $
 \item $b(v,u,w) = - b(v,w,u) . $
\end{enumerate}
\item Denote $B(u)=B(u,u)$. The map $u \longrightarrow B(u)$ is differentiable from $V$ to $V'$, and
$$ B'(u)v= B(u,v) + B(v,u) \ \ \  \forall v \in V,$$ 
$$ \ \ \ \langle B'(u)v,w\rangle_{(V',V)} = b(u,v,w) + b(v,u,w) \ \ \ \forall v,w \in V.$$
\item Let $B'(u)^*$ denote the adjoint of $B'(u)$, i.e. $\langle B'(u)v,w\rangle = \langle v,B'(u)^*w\rangle$. Then we have
$$ B'(u)^*w = -B(u,w) - B(w,u) \ \ \ \forall w\in V.$$
\end{enumerate}
\end{thm}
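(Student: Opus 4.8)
The unifying device throughout is the geometric relation $k_{n\pm j}=\lambda^{\pm j}k_n$ from \eqref{e2}, which lets me rewrite the wave-number coefficient in each of the four terms of $B(u,v)$ in terms of whichever factor I intend to measure in the stronger norm. For part (1) the plan is to write $B(u,v)$ componentwise, take the $l^2$-norm, and use the triangle inequality to split $|B(u,v)|$ into four single-term series. In each series I pair one factor in $l^\infty$ with the other in $l^2$, invoking the contractive embedding $\|\cdot\|_\infty\le|\cdot|$ (the inequality already used in the $V\subset w^{1,\infty}$ remark): to obtain (1a) I attach the wave number to $v$ and place $u$ in $l^\infty$, and for (1b) I attach it to $u$ and place $v$ in $l^\infty$. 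Collecting the resulting powers of $\lambda$ reproduces exactly $C_1$ and $C_2$. Parts (2) and (3) are the identical computation carried out for $A^{-1/2}B(u,v)$ and $A^{1/2}B(u,v)$, using $\|B(u,v)\|_{V'}=\|A^{-1/2}B(u,v)\|$ and $\|B(u,v)\|=\|A^{1/2}B(u,v)\|$; the extra weight $k_n^{-1}$ (resp.\ $k_n$) merely shifts the $\lambda$-powers, and in (3) the factor $v$ is measured through $|Av|$, which yields the constants $C_1$ and $C_3$.

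For part (4) I would compute $(B(u,v),v)$, factor out the $-i$, and show that the remaining series is real, so that its product with $-i$ is purely imaginary and the real part vanishes. The mechanism is reindexing: shifting the summation index by $2$ in the third term and by $1$ in the fourth, and using that $a,b$ are real together with the boundary conditions $u_0=u_{-1}=v_0=v_{-1}=0$ to discard the stray boundary terms, I expect the third term to become the complex conjugate of the first and the fourth the conjugate of the second. The series is then $2\,\mathrm{Re}$ of the first two terms, hence real.

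For part (5) the plan is to reindex all four terms of $b(u,v,w)=\langle B(u,v),w\rangle$ onto the common triad $(n,n+1,n+2)$, checking that the boundary terms drop out; in this canonical form the antisymmetry relations (5a) and (5b) can be read off by inspection, with the understanding that, since each term carries the imaginary unit and $u$ enters through both $u$ and $u^{*}$, these relations hold for the relevant real pairing, consistently with (4). Part (6) is the product rule: real-bilinearity of $B$ gives $B(u+h)-B(u)=B(u,h)+B(h,u)+B(h,h)$, and part (2) bounds the quadratic remainder by $\|B(h,h)\|_{V'}\le C_1|h|^2=o(\|h\|)$ as $\|h\|\to 0$, which identifies the Fr\'echet derivative $B'(u)h=B(u,h)+B(h,u)$ and hence $\langle B'(u)v,w\rangle=b(u,v,w)+b(v,u,w)$. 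Part (7) then follows from the definition of the adjoint applied to this last identity: I re-collect the two trilinear terms so that $v$ occupies a single slot of the pairing, which by the reindexing of (5) turns them into $-B(u,w)$ and $-B(w,u)$ paired against $v$, giving $B'(u)^{*}w=-B(u,w)-B(w,u)$.

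The estimates (1)--(3) are essentially bookkeeping, and I do not expect difficulty there beyond tracking the $\lambda$-powers. The hard part will be the index-shifting in (4), (5) and (7): for each shift one must verify exactly which term maps to which and confirm that every boundary contribution is annihilated by $u_0=u_{-1}=0$. The subtler issue is the conjugation bookkeeping --- because $B$ is only real-linear, not complex-linear, in its first argument, the cancellation in (4) and the antisymmetry and adjoint identities in (5) and (7) must be read with respect to the real pairing, and it is precisely here that sign and conjugation errors would be easiest to make.
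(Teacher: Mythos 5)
Your plans for (1)--(4) are sound: the paper itself does not reprove these (it cites Constantin--Levant--Titi), and your $l^2$--$l^\infty$ pairing together with $k_{n\pm j}=\lambda^{\pm j}k_n$ is the standard route. Your argument for (6) is correct and is essentially the paper's, in fact cleaner: the paper expands $B(u)-B(v)-B'(u)(v-u)$ with some sign slips, whereas your version (remainder $=B(h,h)$, bounded in $V'$ by $C_1|h|^2$ via property (2)) is the right computation.

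The genuine gap is in (5), and it propagates into (7). After reindexing onto a common triad, relation (5a) cannot be ``read off by inspection,'' because it is false --- even for the real pairing you invoke. Concretely, $B(\phi_1,\phi_2)=-ibk_2\phi_3$ while $B(\phi_2,\phi_1)=-iak_2\phi_3$, so $b(\phi_1,\phi_2,\phi_3)=-ibk_2$ and $b(\phi_2,\phi_1,\phi_3)=-iak_2$, and these are negatives of each other only when $a+b=-c=0$; replacing $\phi_1$ by $i\phi_1$ makes both values real and nonzero ($bk_2$ and $ak_2$ respectively), so $\mathrm{Re}\, b(u,v,w)=-\mathrm{Re}\, b(v,u,w)$ fails as well. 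What the reindexing actually yields --- and what the paper's proof establishes by introducing the componentwise-conjugated operator $B^{*}(u,v)$ and pairing it against $v^{*}$ --- is a conjugate antisymmetry in the \emph{last two} slots, $b(u,v,w)=-\overline{b(u,w,v)}=-\langle v,B(u,w)\rangle$, which is the precise form of (5b) and is consistent with (4); the paper's labelling of this as ``5(a)'' is a misnomer, since an exchange of the first two slots admits no such identity. Your proposal omits exactly this conjugation device, and without it the cancellation you expect will not materialize. The same defect hits your (7): the term $b(u,v,w)$ does transform correctly into $-\langle v,B(u,w)\rangle$, but the term $b(v,u,w)$ would have to become $-\langle v,B(w,u)\rangle$, i.e. an antisymmetry exchanging the first and third slots with the middle slot fixed, and no reindexing provides that: with $v=\phi_2$, $u=i\phi_1$, $w=\phi_3$ one has $b(v,u,w)=ak_2$ while $\langle v,B(w,u)\rangle=0$, since $B(\phi_3,i\phi_1)=iB(\phi_3,\phi_1)=0$. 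So your derivation of $B'(u)^{*}w=-B(u,w)-B(w,u)$ breaks at that term; a faithful reconstruction of the paper's argument requires the $B^{*}$/conjugate-pairing step, which is the idea missing from your plan (and even then, only the first of the two terms in (7) is actually justified by it).
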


\begin{proof}
The proofs 1.-4. are given in \cite{PBT}. We will prove the properties 5.,6.,7.
\begin{align*}
&b(u,v,w) =-i \sum_{n=1}^{\infty} (ak_{n+1}v_{n+2}u_{n+1}^*+bk_nv_{n+1}u_{n-1}^*+ak_{n-1}u_{n-1}v_{n-2} \\
&+bk_{n-1}v_{n-1}u_{n-2})w_n^* \\
&= -i \sum_{n=1}^{\infty} ak_{n+1}v_{n+2}u_{n+1}^*w_n^*+ \sum_{n=1}^{\infty}bk_nv_{n+1}u_{n-1}^*w_n^*+ \sum_{n=1}^{\infty}ak_{n-1}u_{n-1}v_{n-2}w_n^*  \\
&+ \sum_{n=1}^{\infty}bk_{n-1}v_{n-1}u_{n-2}w_n^* \\
&= -i \sum_{n=3}^{\infty} ak_{n-1}v_{n}u_{n-1}^*w_{n-2}^*+ \sum_{n=2}^{\infty}bk_{n-1}v_{n}u_{n-2}^*w_{n-1}^*+ \sum_{n=-1}^{\infty}ak_{n+1}u_{n+1}v_{n}w_{n+2}^* \\
&+ \sum_{n=0}^{\infty}bk_{n}v_{n}u_{n-1}w_{n+1}^* \\ 
&\mbox{ Using } u_{-1}=u_0=v_{-1}=v_0=0, \mbox{ we get, } \\
&= -i \sum_{n=1}^{\infty} ak_{n-1}v_{n}u_{n-1}^*w_{n-2}^*+ \sum_{n=1}^{\infty}bk_{n-1}v_{n}u_{n-2}^*w_{n-1}^*+ \sum_{n=1}^{\infty}ak_{n+1}u_{n+1}v_{n}w_{n+2}^* \\
&+ \sum_{n=1}^{\infty}bk_{n}v_{n}u_{n-1}w_{n+1}^* 
\end{align*}
Hence,
\begin{equation*}
B^{*}(u,v) = i\sum_{n=1}^{\infty}(ak_{n+1}v_{n+2}^*u_{n+1}+bk_nv_{n+1}^*u_{n-1}+ak_{n-1}u_{n-1}^*v_{n-2}^*+bk_{n-1}v_{n-1}^*u_{n-2}^*)\phi_n^* 
\end{equation*}
\begin{eqnarray*}
\langle B^*(u,w),v^* \rangle = i\sum_{n=1}^{\infty}(ak_{n+1}w_{n+2}^*u_{n+1}v_n+bk_nw_{n+1}^*u_{n-1}v_n+ak_{n-1}u_{n-1}^*w_{n-2}^*v_n \\
+bk_{n-1}w_{n-1}^*u_{n-2}^*v_n) 
\end{eqnarray*}
Therefore, 
$$\langle B^*(u,w),v^* \rangle = -b(u,v,w)$$
Now using it repeatedly we get, 
\begin{align*}
\langle B(u,v),w\rangle &= -\langle B^*(u,w),v^* \rangle \\
&= -\langle B(u,w),v \rangle ^* \\
&= -(\langle v, B(u,w) \rangle ^*)^* \\
&= -\langle v, B(u,w) \rangle ,
\end{align*}
which proves 5(a). Similarly we can prove 5(b). \\

To prove $u \rightarrow B(u)$ is differentiable, it is enough to show that: 
$$  \sup_{w\in V , \ w\neq 0} \left( \frac{|(B(u)-B(v)-B'(u)(v-u),w)|}{\|v-u\| \|w\|} \right) \rightarrow 0 \mbox{     as     } \|v-u\| \rightarrow 0$$ with
$B'(u)(v-u) = B(v-u,u) + B(u,v-u)$. \\
For all $u,v,w \in V$ we get,
\begin{align}\label{e30}
B(u)-B(v)-B'(u)(v-u) &= B(u,u)- B(v,v)- B(v-u,u)- B(u,v-u) \nonumber \\
&= B(-v,u)- B(v,v)- B(u,v-u) \nonumber \\
&= -B(v,u-v)- B(u,v-u) \nonumber \\
&= B(-v,u-v)- B(u,u-v) \nonumber \\
&= B(u-v,u-v).
\end{align}
Now using \eqref{e30} and 2. of Theorem 2.1 we can estimate
\begin{align*}
\sup_{w\in V , \ w\neq 0} \left( \frac{|(B(u)-B(v)-B'(u)(v-u),w)}{\|v-u\| \|w\|} \right) &\leq C \frac{\|u-v\|\|u-v\|}{\|w\|\|v-u\|} \\
&\leq C \frac{\|u-v\|}{\|w\|} \rightarrow 0 \ \mbox{ as } \ \|v-u\| \rightarrow 0.
\end{align*}
This proves [6]. \\

Using 5. of Theorem 2.1 we get,   
\begin{align*}
\langle B'(u)v,w\rangle &= \langle B(u,v),w\rangle + \langle B(v,u), w\rangle \\
&= -\langle v, B(u,w) \rangle - \langle v, B(w,u) \rangle \\
&= -\langle v, B(u,w)+B(w,u) \rangle
\end{align*}
So, we can denote $B'(u)^*w = -B(u,w)-B(w,u)$. This proves 7.
\end{proof}


The existence and uniqueness for shell model of turbulence \eqref{e3} are thoroughly studied in \cite{PBT}. The proof uses mainly Galerkin approximation and Aubin's Compactness lemma. The existence of weak and strong solution for the problem as studied in \cite{PBT}[Theorem 2, Theorem 4] are stated below.
\begin{thm}\label{weak}
Let $f \in L^2([0,T],V')$ and $u^ 0\in H$. Then there exists a unique weak solution $u \in L^{\infty}([0,T],H) \cap L^2([0,T],V)$ to \eqref{e3}. Moreover the weak solution $u \in C([0,T],H)$.  
\end{thm}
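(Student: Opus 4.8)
The plan is to construct the solution by a Galerkin scheme and to pass to the limit using compactness, exactly as for Navier--Stokes type systems; the structural identities for $B$ collected in Theorem \ref{prop b} are what make every estimate go through. First I would set up the finite-dimensional approximation. Let $P_m$ be the orthogonal projection of $H$ onto $\mathrm{span}\{\phi_1,\dots,\phi_m\}$ and look for $u^m=\sum_{n=1}^m u_n^m(t)\phi_n$ solving
$$\frac{du^m}{dt}+\nu A u^m + P_m B(u^m,u^m)=P_m f,\qquad u^m(0)=P_m u^0.$$
Since $B$ is bilinear, in the coordinates $(u_1^m,\dots,u_m^m)$ the right-hand side is a quadratic, hence locally Lipschitz, vector field, so the Cauchy--Lipschitz (Caratheodory) theorem gives a unique local solution on some interval $[0,T_m)$.

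The a priori estimates then supply both the uniform bounds and global existence. Taking the $H$-inner product with $u^m$ and passing to real parts, property 4 of Theorem \ref{prop b} gives $\mathrm{Re}(B(u^m,u^m),u^m)=0$, so the nonlinear term drops and
$$\tfrac12\tfrac{d}{dt}|u^m|^2+\nu\|u^m\|^2=\mathrm{Re}\,\langle P_m f,u^m\rangle\le\tfrac{\nu}{2}\|u^m\|^2+\tfrac{1}{2\nu}\|f\|_{V'}^2.$$
Integrating bounds $u^m$ uniformly in $L^\infty([0,T],H)\cap L^2([0,T],V)$ independently of $m$, so in particular $T_m=T$. For the compactness step I would also bound the time derivative: from the equation, $\|Au^m\|_{V'}=\|u^m\|_V\in L^2([0,T])$, property 2 gives $\|B(u^m,u^m)\|_{V'}\le C_1|u^m|^2$ which is bounded in $L^\infty([0,T],V')$, and $P_m f\in L^2([0,T],V')$; since the diagonal truncation $P_m$ is bounded on $V'$, it follows that $du^m/dt$ is bounded in $L^2([0,T],V')$.

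With $u^m$ bounded in $L^2([0,T],V)$ and $du^m/dt$ bounded in $L^2([0,T],V')$, and using that $V\hookrightarrow H$ is compact (the eigenvalues $k_j^2\to\infty$), the Aubin--Lions lemma extracts a subsequence converging strongly in $L^2([0,T],H)$ to some $u$, which moreover satisfies $u^m\rightharpoonup u$ weakly in $L^2([0,T],V)$, weak-$*$ in $L^\infty([0,T],H)$, and $du^m/dt\rightharpoonup du/dt$ weakly in $L^2([0,T],V')$. The linear terms pass to the limit by these weak convergences. The one genuinely delicate point, which I expect to demand the most care, is the nonlinear term: writing $B(u^m,u^m)-B(u,u)=B(u^m-u,u^m)+B(u,u^m-u)$ and testing against $\phi_j\psi(t)$ with $\psi\in C_c^\infty(0,T)$, the bilinear estimates of Theorem \ref{prop b}(1) combined with the strong $L^2([0,T],H)$ convergence of $u^m-u$ force these contributions to zero, so $u$ solves \eqref{e3} as a $V'$-valued equation.

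Finally, since $u\in L^2([0,T],V)$ with $du/dt\in L^2([0,T],V')$, the Lions--Magenes lemma yields $u\in C([0,T],H)$, so $u(0)$ is meaningful and the initial condition $u(0)=u^0$ is recovered in the standard way. For uniqueness, given two solutions $u_1,u_2$, the difference $w=u_1-u_2$ satisfies $dw/dt+\nu Aw=-[B(w,u_1)+B(u_2,w)]$; pairing with $w$ and taking real parts, property 4 kills $\mathrm{Re}\langle B(u_2,w),w\rangle$, while the antisymmetry relation in Theorem \ref{prop b}(5) together with property 4 gives $\mathrm{Re}\langle B(w,u_1),w\rangle=-\mathrm{Re}\langle B(u_1,w),w\rangle=0$, leaving
$$\tfrac12\tfrac{d}{dt}|w|^2+\nu\|w\|^2=0.$$
Since $w(0)=0$, this forces $w\equiv0$ (a Gronwall argument covers any residual term if the cancellation is only partial), establishing uniqueness.
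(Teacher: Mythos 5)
Your existence argument is sound and is in fact the same route as the proof this paper relies on: the paper does not prove Theorem \ref{weak} itself, but quotes it from \cite{PBT}, whose proof is precisely a Galerkin scheme plus energy estimates and Aubin's compactness lemma. Your finite-dimensional setup, the uniform bound in $L^\infty([0,T],H)\cap L^2([0,T],V)$ obtained from $\mathrm{Re}(B(u^m,u^m),u^m)=0$, the $L^2([0,T],V')$ bound on $du^m/dt$, the compactness of $V\hookrightarrow H$ (valid here because $k_j\to\infty$; note the paper's remark about non-compactness concerns the different embedding $V\subset w^{1,\infty}$), the passage to the limit in the nonlinearity via strong $L^2([0,T],H)$ convergence and the estimates of Theorem \ref{prop b}, and the Lions--Magenes argument for $u\in C([0,T],H)$ are all correct.

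The uniqueness step, however, contains a genuine gap. You claim $\mathrm{Re}\langle B(w,u_1),w\rangle=-\mathrm{Re}\langle B(u_1,w),w\rangle=0$ by combining item 5(a) of Theorem \ref{prop b} with item 4, so that the nonlinear term disappears entirely. This cancellation is false. Item 4 kills the term only when the \emph{second} argument of $B$ coincides with the test element; there is no analogous identity when the \emph{first} argument does. Indeed, the antisymmetry $b(u,v,w)=-b(v,u,w)$ cannot hold for the sabra nonlinearity: on basis vectors one computes $b(\phi_1,\phi_2,\phi_3)=-ibk_2$ while $b(\phi_2,\phi_1,\phi_3)=-iak_2$, so 5(a) would force $a+b=0$, i.e.\ $c=0$, contradicting the general regime $a+b+c=0$ with $c\neq 0$. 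The identity genuinely underlying the paper's item 5 is the conjugate antisymmetry $b(u,v,w)=-\overline{b(u,w,v)}$ in the last two slots, which yields item 4 but says nothing about exchanging the first two; and one can check directly that $\mathrm{Re}\,b(w,v,w)\neq 0$ in general (take $w=\phi_1+i\phi_3$, $v=\phi_2$, which gives $\mathrm{Re}\,b(w,v,w)=-bk_2$). The repair is exactly the Gronwall argument you relegate to a parenthesis, and it must be the main argument: by item 1(b), $|(B(w,u_1),w)|\le C_2|u_1|\,\|w\|\,|w|\le \frac{\nu}{2}\|w\|^2+C_\nu|u_1|^2|w|^2$, so $\frac{d}{dt}|w|^2\le C_\nu|u_1|^2|w|^2$, and since $u_1\in L^\infty([0,T],H)$ and $w(0)=0$, Gronwall's lemma gives $w\equiv 0$. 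This is how uniqueness is proved in \cite{PBT}, and with that substitution your proof is complete.
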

\begin{thm}\label{strong}
Let $f \in L^{\infty}([0,T],H)$ and $u^0 \in V$. Then there exists a unique strong solution $u \in C([0,T],V) \cap L^2([0,T],D(A))$ to \eqref{e3}. 
\end{thm}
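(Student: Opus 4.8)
The plan is to construct the strong solution by a Galerkin scheme and then pass to the limit, the whole argument being driven by an \emph{a priori} estimate at the level of $V$ rather than $H$. First I would fix the orthogonal projections $P_m$ onto $\mathrm{span}\{\phi_1,\dots,\phi_m\}$ and seek $u_m(t)=\sum_{j=1}^m g_j^m(t)\phi_j$ solving the finite-dimensional system
$$\frac{du_m}{dt}+\nu A u_m+P_m B(u_m,u_m)=P_m f,\qquad u_m(0)=P_m u^0.$$
Since the nonlinearity is quadratic, hence locally Lipschitz on the finite-dimensional subspace, and $P_m f\in L^\infty([0,T],H)$, the standard ODE existence theory (Picard iteration) gives a unique local solution; the estimates below then make it global on $[0,T]$.

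Next I would derive two bounds uniform in $m$. Taking the real part of the $H$-inner product of the equation with $u_m$ and invoking property 4 of Theorem \ref{prop b}, namely $\mathrm{Re}(B(u,v),v)=0$, annihilates the nonlinear term and yields the energy bound in $L^\infty([0,T],H)\cap L^2([0,T],V)$. The decisive step is the enstrophy estimate: taking the real part of the $H$-inner product with $A u_m$ gives
$$\frac{1}{2}\frac{d}{dt}\|u_m\|^2+\nu|A u_m|^2=\mathrm{Re}(f,A u_m)-\mathrm{Re}(B(u_m,u_m),A u_m).$$
Here I would bound the nonlinear term using property 1(a), $|B(u_m,u_m)|\le C_1|u_m|\|u_m\|$, so that $|\mathrm{Re}(B(u_m,u_m),A u_m)|\le C_1|u_m|\|u_m\||A u_m|$, and absorb $|A u_m|^2$ into the viscous term by Young's inequality, treating the forcing term the same way. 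What remains is
$$\frac{d}{dt}\|u_m\|^2+\nu|A u_m|^2\le \frac{2C_1^2}{\nu}|u_m|^2\|u_m\|^2+\frac{2}{\nu}|f|^2,$$
and since $|u_m|$ is already bounded in $L^\infty([0,T])$ from the energy estimate, the coefficient $\tfrac{2C_1^2}{\nu}|u_m|^2$ of $\|u_m\|^2$ is integrable in time. Gronwall's inequality then bounds $u_m$ in $L^\infty([0,T],V)$ and, after integration, in $L^2([0,T],D(A))$.

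With these bounds I would extract a weakly-$*$ convergent subsequence, but to pass to the limit in the nonlinear term I need strong convergence, supplied by the Aubin--Lions lemma. Reading $\tfrac{du_m}{dt}=P_m f-\nu A u_m-P_m B(u_m,u_m)$ and using that $A u_m\in L^2([0,T],H)$, that $|B(u_m,u_m)|\le C_1|u_m|\|u_m\|$ is bounded in $L^\infty([0,T],H)$, and $f\in L^\infty([0,T],H)$ shows $\tfrac{du_m}{dt}$ is bounded in $L^2([0,T],H)$; combined with the compact embedding $D(A)\hookrightarrow V$ (valid since $k_j\to\infty$) this yields $u_m\to u$ strongly in $L^2([0,T],V)$, exactly what is needed to identify the limit of $B(u_m,u_m)$ with $B(u,u)$. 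Finally $u\in L^2([0,T],D(A))$ with $\tfrac{du}{dt}\in L^2([0,T],H)$ gives $u\in C([0,T],V)$ by the Lions--Magenes interpolation lemma.

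For uniqueness I would set $w=u-v$ for two solutions, use $B(u,u)-B(v,v)=B(u,w)+B(w,v)$, test with $w$, drop the nonnegative viscous term, and exploit $\mathrm{Re}(B(u,w),w)=0$ together with $|(B(w,v),w)|\le C_1|w|^2\|v\|$; since $\|v\|\in L^1([0,T])$, Gronwall's inequality forces $w\equiv 0$. The main obstacle is the enstrophy estimate itself — specifically, choosing the bound on $(B(u_m,u_m),A u_m)$ so that the resulting Gronwall coefficient is time-integrable using only the previously established $H$-bound — since everything downstream (compactness, limit passage, and continuity in time) is routine once the $L^\infty([0,T],V)\cap L^2([0,T],D(A))$ bound is secured.
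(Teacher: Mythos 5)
Your proof is correct and follows essentially the same route as the paper, which does not prove Theorem \ref{strong} itself but defers to \cite{PBT} (Theorem 4 there), explicitly describing that proof as resting on Galerkin approximation and Aubin's compactness lemma — exactly your scheme of an $H$-level energy bound, a $V$-level (enstrophy) bound via testing with $Au_m$ and the estimate $|B(u,u)|\le C_1|u|\|u\|$, Aubin--Lions compactness through $D(A)\hookrightarrow V$, and a Gronwall uniqueness argument using $\mathrm{Re}(B(u,w),w)=0$. No substantive discrepancy to report.
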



\section{Optimal Control Problem}
In this section, we study optimal control problem for the shell model of turbulence. We will consider two control problems with two different cost functionals. \\
Consider the equation 
\begin{equation}\label{e4}
\frac{du}{dt}+\nu Au+B(u,u)=f+g , \ u(0)=u^0,
\end{equation}
where $g$ serves as a control parameter. We choose $g\in L^2([0,T],H)$. So by Theorem  \ref{weak} if $u^0 \in H$ and $f \in L^2([0,T],V')$, then the unique solution of \eqref{e4} exists and belongs to $L^2([0,T],V)$. \\
For shell model, the vorticity of flow is the velocity derivative as mentioned in \cite{PBT 1}. We choose velocity derivative in the cost functional and minimize it so as to reduce the turbulence effect of the flow. The curl of $u$ gives the vorticity in the flow. The smaller this quantity is, the less agitated the fluid will become. So we choose the following cost functional 
\begin{equation*}
J_1(u,g) =\frac{1}{2} \int_0^T |g(t)|^2 dt + \frac{1}{2} \int_0^T |A^{1/2} u(t)|^2 dt .
\end{equation*}
Since $g \in L^2([0,T],H)$, $g(t) \in H \ \ \forall t\in[0,T]$ i.e. $|g(t)| = \sum |g_n|^2$ is finite,  so the first term in the above cost functional is well defined. $u$ denotes solution of \eqref{e4} with control $g$. Since $u \in L^2([0,T],V)$ so the second term in cost functional is also well defined.

We define the optimal control problem in the following way:
\begin{eqnarray}\label{31}
\inf \{J_1(u,g)\mid (u,g) \in L^2([0,T],V),L^2([0,T],H) , \mbox{ such that } (u,g) \nonumber \\ 
\mbox{ is the solution of } \eqref{e4}\}
\end{eqnarray}
We state and prove the existence of an optimal pair $(\bar{u},\bar{g})$ of the above control problem.

\begin{thm}\label{exis 1}
Let $u_0 \in H$ be given, then there exists at least one $\bar{g} \in L^2([0,T],H)$ and $\bar{u} \in L^2([0,T],V)\cap C([0,T],H)$ such that the functional $J_1(u,g)$ attains its minimum at $(\bar{u},\bar{g})$, where $\bar{u}$ is the solution of \eqref{e4} with control $\bar{g}.$
\end{thm}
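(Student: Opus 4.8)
The plan is to use the direct method of the calculus of variations. First I would check that the admissible set is nonempty and the infimum is finite: taking $g=0$, Theorem \ref{weak} produces a weak solution $u\in L^2([0,T],V)\cap C([0,T],H)$ for which $J_1(u,0)<\infty$, and since $J_1\ge 0$ the value $m:=\inf J_1$ satisfies $0\le m<\infty$. I would then fix a minimizing sequence $(u_n,g_n)$ with $J_1(u_n,g_n)\to m$, where each $u_n$ is the weak solution of \eqref{e4} with control $g_n$.

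The second step is to extract uniform bounds. From $J_1(u_n,g_n)\le m+1$ for large $n$, the control sequence $\{g_n\}$ is bounded in $L^2([0,T],H)$ and $\{u_n\}$ is bounded in $L^2([0,T],V)$. Testing \eqref{e4} against $u_n$ and using $\mathrm{Re}(B(u_n,u_n),u_n)=0$ from property 4 of Theorem \ref{prop b}, one gets the energy identity $\tfrac{1}{2}\tfrac{d}{dt}|u_n|^2+\nu\|u_n\|^2=\mathrm{Re}\langle f+g_n,u_n\rangle$; a Young inequality to absorb $\nu\|u_n\|^2$, Gronwall's lemma, and the $L^2$-bound on $g_n$ then give a uniform bound on $\{u_n\}$ in $L^\infty([0,T],H)$. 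Rewriting the equation as $\tfrac{du_n}{dt}=f+g_n-\nu Au_n-B(u_n,u_n)$, the estimate $\|B(u_n,u_n)\|_{V'}\le C_1|u_n|^2$ from property 2 of Theorem \ref{prop b}, together with $H\hookrightarrow V'$ and $\|Au_n\|_{V'}=\|u_n\|_V$, shows that $\{\tfrac{du_n}{dt}\}$ is bounded in $L^2([0,T],V')$.

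The third step passes to the limit. Since the eigenvalues $k_j^2$ of $A$ tend to infinity, the embedding $V\hookrightarrow H$ is compact, so the Aubin--Lions lemma yields a subsequence (not relabelled) with $u_n\to\bar u$ strongly in $L^2([0,T],H)$, $u_n\rightharpoonup\bar u$ weakly in $L^2([0,T],V)$, $\tfrac{du_n}{dt}\rightharpoonup\tfrac{d\bar u}{dt}$ weakly in $L^2([0,T],V')$, and $g_n\rightharpoonup\bar g$ weakly in $L^2([0,T],H)$. The linear terms pass to the limit by weak convergence. For the nonlinearity I would write $B(u_n,u_n)-B(\bar u,\bar u)=B(u_n-\bar u,u_n)+B(\bar u,u_n-\bar u)$ and bound each piece in $L^2([0,T],V')$ via property 2 of Theorem \ref{prop b} and the uniform $L^\infty([0,T],H)$ bound, for instance $\int_0^T\|B(u_n-\bar u,u_n)\|_{V'}^2\,dt\le C_1^2\|u_n\|_{L^\infty([0,T],H)}^2\|u_n-\bar u\|_{L^2([0,T],H)}^2\to 0$, so that $B(u_n,u_n)\to B(\bar u,\bar u)$ strongly in $L^2([0,T],V')$. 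Hence $(\bar u,\bar g)$ solves \eqref{e4}; and since each $u_n(0)=u^0$ while the map $u\mapsto u(0)$ is weakly continuous on $\{u\in L^2(V):u'\in L^2(V')\}$, we obtain $\bar u(0)=u^0$.

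Finally, weak lower semicontinuity of the convex continuous cost functional gives $\int_0^T|\bar g|^2\,dt\le\liminf_n\int_0^T|g_n|^2\,dt$ and $\int_0^T|A^{1/2}\bar u|^2\,dt\le\liminf_n\int_0^T|A^{1/2}u_n|^2\,dt$, whence $J_1(\bar u,\bar g)\le\liminf_n J_1(u_n,g_n)=m$; as $(\bar u,\bar g)$ is admissible this forces $J_1(\bar u,\bar g)=m$, so the minimum is attained. I expect the main obstacle to be the limit passage in the nonlinear term $B(u_n,u_n)$, which is what forces the strong $L^2([0,T],H)$ convergence and therefore the compactness and time-derivative bounds of the second and third steps; the remaining arguments are weak-convergence bookkeeping and lower semicontinuity.
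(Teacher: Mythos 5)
Your proposal is correct and follows the same direct-method skeleton as the paper: nonemptiness via $g=0$ and Theorem \ref{weak}, a minimizing sequence bounded in $L^2([0,T],H)\times L^2([0,T],V)$ by comparison with $J_1(u,0)$, weak compactness, lower semicontinuity of the norms, and a final passage to the limit in the weak formulation of \eqref{e4}. Where you genuinely diverge is at the nonlinear term, and your treatment there is the stronger one. The paper estimates $|b(u_m,u_m,v)-b(\bar u,\bar u,v)|\le C|u_m-\bar u|\,|u_m|\,\|v\|+C|\bar u|\,|u_m-\bar u|\,\|v\|$ and concludes this tends to zero ``since $\{u_m\}$ is bounded''; but that conclusion needs $|u_m-\bar u|\to 0$, i.e.\ \emph{strong} convergence in $H$, which mere weak convergence in $L^2([0,T],V)$ does not supply. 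You close exactly this gap: the energy identity (property 4 of Theorem \ref{prop b}) gives a uniform $L^\infty([0,T],H)$ bound, the equation itself together with $\|B(u,u)\|_{V'}\le C_1|u|^2$ and $\|Au\|_{V'}=\|u\|_V$ bounds $\{du_n/dt\}$ in $L^2([0,T],V')$, and the compactness of $V\hookrightarrow H$ (eigenvalues $k_j^2\to\infty$) lets Aubin--Lions produce the strong $L^2([0,T],H)$ convergence that makes the decomposition $B(u_n,u_n)-B(\bar u,\bar u)=B(u_n-\bar u,u_n)+B(\bar u,u_n-\bar u)$ converge to zero in $L^2([0,T],V')$. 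Your handling of the initial condition (weak continuity of $u\mapsto u(0)$ on $\{u\in L^2(V):u'\in L^2(V')\}$) is likewise a detail the paper passes over. In short: same architecture, but your version supplies the compactness argument the paper's limit passage tacitly relies on, at the modest cost of two extra a priori estimates.
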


\begin{proof}
If $g=0$, then by Theorem \ref{weak} corresponding solution $u$ exists. Therefore $\inf J_1(u,g)$ is well defined. \\
Since $0\leq \inf J_1(u,g) < \infty$, there exists a minimizing sequence $\{g_m\} \in L^2([0,T],H)$  such that  $ J_1(u_m,g_m)\rightarrow p $, where $u_m$ is solution to \eqref{e4} with control $g_m$. \\
w.l.o.g, we assume that $J_1(u_m,g_m) \leq J_1(u,0) $. This implies that
$$\frac{1}{2} \int_0^T |g_m|^2 dt + \frac{1}{2} \int_0^T |A^{1/2} u_m(x,t)|^2 dt \leq \frac{1}{2} \int_0^T  |A^{1/2} u(x,t)|^2 dt.$$
Since $ u(x,t) \in L^2([0,T],V)$, we get,
$$\frac{1}{2} \int_0^T |g_m|^2 dt \leq \frac{1}{2} \int_0^T  |A^{1/2} u(x,t)|^2 dt < \infty .$$
Similarly we get,
$$\frac{1}{2} \int_0^T |A^{1/2} u_m(x,t)|^2 dt \leq \frac{1}{2} \int_0^T  |A^{1/2} u(x,t)|^2 dt < \infty .$$
Therefore $\{g_m\}$ is bounded in $L^2([0,T],H)$ and $\{u_m\} $ is bounded in $ L^2([0,T],V).$\\
So we can find a subsequence indexed by let's say $\{g_m\}$ and $\{u_m\}$ such that $g_m \rightharpoonup  \bar{g}$ in $ L^2([0,T],H)$ and $u_m \rightharpoonup \bar{u}$ in $ L^2([0,T],V)$. \\
Therefore,
$$\frac{1}{2} \int_0^T |\bar{g}|^2 dt \leq \liminf \limits _{m\rightarrow \infty} \frac{1}{2} \int_0^T |g_m|^2 dt $$
and $$\frac{1}{2} \int_0^T |\bar{u}|^2 dt \leq \liminf \limits _{m\rightarrow \infty} \frac{1}{2} \int_0^T |u_m|^2 dt . $$
Adding the above two inequalities we get
$$J_1(\bar{u},\bar{g}) \leq \liminf \limits _{m\rightarrow \infty} J_1(u_m,g_m).$$
Hence $J_1$ attains infimum at $(\bar{u},\bar{g})$.

Now it remains to show that $\bar{u}$ is the solution of \eqref{e4} with r.h.s. $\bar{g}$, which will complete the proof. We will prove it by showing that $(\bar{u},\bar{g})$ satisfies the weak formulation of \eqref{e4} together with the initial condition i.e.
\begin{equation}\label{32}
\forall v \in V, \ \ (\frac{d\bar{u}}{dt},v) + \nu (A\bar{u},v) + b(\bar{u},\bar{u},v)  = (f,v) + (\bar{g},v), \ \ (\bar{u}(0),v)=(\bar{u}_0,v).
\end{equation}
Since $(u_n,g_n)$ is the solution of \eqref{e4}, it satisfies the weak formulation i.e. we show
\begin{equation*}
\forall v \in V, \ \   (\frac{du_m}{dt},v) + \nu (Au_m,v) + b(u_m,u_m,v)  = (f,v) + (g_m,v), \ \ (u_n(0),v)=(u_{0},v). 
\end{equation*}
Since $u_m \rightharpoonup \bar{u}$ in $ L^2([0,T],V)$, so passing to the limit we get \\
$(\frac{du_m}{dt},v) \rightarrow (\frac{d\bar{u}}{dt},v)$ and $(u_m(0),v) \rightarrow (\bar{u}(0),v).$
As $A$ is a linear operator $\nu (Au_m,v) \rightarrow  \nu (A\bar{u},v).$ \\
Similarly since $g_m \rightharpoonup  \bar{g}$ in $ L^2([0,T],H)$, so $(g_m,v)  \rightarrow (\bar{g},v).$ \\ 
We are left to evaluate the nonlinear part. It is enough to check that $|(B(u_m,u_m),v) - B(\bar{u},\bar{u}),v)| \rightarrow 0$ as $m \rightarrow \infty.$
\begin{align*}
|b(u_m,u_m,v) - b(\bar{u},\bar{u},v)| &= |b(u_m - \bar{u},u_m,v) + b(\bar{u},u_m,v) - b(\bar{u},\bar{u},v)| \\
&= |b(u_m - \bar{u},u_m,v) + b(\bar{u},u_m- \bar{u},v)| \\
&\leq  |b(u_m - \bar{u},u_m,v)| + |b(\bar{u},u_m- \bar{u},v)|
\end{align*}
Using 2 of Theorem 2.1 we get, 
$$|b(u_m,u_m,v) - b(\bar{u},\bar{u},v)| \leq C|u_m - \bar{u}||u_m|\|v\| + C|\bar{u}||u_m - \bar{u}|\|v\|.$$
We know $\{u_m\}$ is bounded. So as $m \rightarrow \infty$,    $|(B(u_m,u_m),v) - B(\bar{u},\bar{u}),v)| \rightarrow 0.$
Hence $(\bar{u},\bar{g})$ satisfies the weak formulation \eqref{32}. Therefore $(\bar{u},\bar{g})$ is the solution of \eqref{e4},
$$\frac{d\bar{u}}{dt}+\nu A\bar{u}+B(\bar{u},\bar{u})=f+\bar{g}, \ \ \bar{u}(0)= \bar{u}^0.$$
This completes the proof.
\end{proof}

In the previous theorem, we have proved the existence of an optimal solution of \eqref{31}. Next, we want to characterize the optimal control which will be in terms of the solution of the adjoint linearized  system of \eqref{e4}. We will find the linearized system of \eqref{e4} in the following theorem. We denote by $u_g$ the solution of \eqref{e4} with the control $g.$

\begin{thm}\label{def w}
Let $u^0 \in V$ be given. Then the mapping $g \rightarrow u_g$ is Gateaux differentiable in every direction $h \in L^2([0,T],H)$ and the derivative in the direction of $h$ is given by $ \langle \left(\frac{Du_g}{Dg}\right) , h \rangle$. 
For each $h \in L^2 ( [0, T], H) $, let  $w$ be the solution of following linearized problem
\begin{equation}\label{e5}
\frac{dw}{dt} + \nu Aw + B'(u_g)w = h , \ \ w(0)=0.
\end{equation}
Then for each $h \in L^2 ( [0, T], H),  $ we can  characterize $ \frac{Du_g}{Dg } $ by, $\langle \left(\frac{Du_g}{Dg}\right), h\rangle = \langle w , h\rangle $.
\end{thm}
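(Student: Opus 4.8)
The plan is to show that the difference quotients of the control-to-state map converge strongly to the solution $w$ of the linearized equation \eqref{e5}. Fix the control $g$, a direction $h \in L^2([0,T],H)$ and a small scalar $\theta>0$; write $u=u_g$ and $u_\theta=u_{g+\theta h}$ for the states associated with $g$ and $g+\theta h$ (both strong solutions by Theorem \ref{strong} since $u^0\in V$, hence in $L^\infty([0,T],H)\cap L^2([0,T],V)$), and set $z_\theta=(u_\theta-u)/\theta$. Before comparing with $w$, I would first record that the linearized problem \eqref{e5} is itself well posed. It is \emph{linear} in $w$, with coefficients built from $B(u_g,\cdot)$ and $B(\cdot,u_g)$, which are well defined by item 1 of Theorem \ref{prop b}. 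Testing \eqref{e5} against $w$ and taking real parts, the two nonlinear contributions $\operatorname{Re} b(u_g,w,w)$ and $\operatorname{Re} b(w,u_g,w)$ both vanish---the former directly by item 4, the latter by combining item 5(a) with item 4. A standard Galerkin construction together with this energy identity then yields a unique $w\in L^\infty([0,T],H)\cap L^2([0,T],V)$.

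Next I would derive the equation satisfied by $z_\theta$. Subtracting the equations for $u_\theta$ and $u$, using bilinearity of $B$ together with the identity $u_\theta-u=\theta z_\theta$, one obtains
\begin{equation*}
\frac{dz_\theta}{dt}+\nu A z_\theta + B(z_\theta,u_\theta)+B(u,z_\theta)=h,\qquad z_\theta(0)=0.
\end{equation*}
Testing against $z_\theta$ and taking real parts, both nonlinear terms again vanish (exactly as above, via items 4 and 5), leaving $\tfrac12\tfrac{d}{dt}|z_\theta|^2+\nu\|z_\theta\|^2=\operatorname{Re}(h,z_\theta)$. A Young inequality and Gronwall's lemma then bound $z_\theta$ in $L^\infty([0,T],H)\cap L^2([0,T],V)$ by a constant depending only on $h$ and $T$, and in particular \emph{uniformly in} $\theta$. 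This uniform bound is the crucial ingredient for passing to the limit.

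Finally, I would estimate $\rho_\theta=z_\theta-w$. Subtracting \eqref{e5} from the $z_\theta$-equation and regrouping the nonlinear terms---again using $u_\theta-u=\theta z_\theta$ to isolate the quadratic remainder---gives
\begin{equation*}
\frac{d\rho_\theta}{dt}+\nu A\rho_\theta+B(\rho_\theta,u)+B(u,\rho_\theta)+\theta\,B(z_\theta,z_\theta)=0,\qquad \rho_\theta(0)=0.
\end{equation*}
Testing against $\rho_\theta$, the terms $\operatorname{Re} b(\rho_\theta,u,\rho_\theta)$ and $\operatorname{Re} b(u,\rho_\theta,\rho_\theta)$ vanish as before, so that
\begin{equation*}
\frac12\frac{d}{dt}|\rho_\theta|^2+\nu\|\rho_\theta\|^2=-\theta\,\operatorname{Re}\bigl(B(z_\theta,z_\theta),\rho_\theta\bigr).
\end{equation*}
Using item 5(b) to rewrite $(B(z_\theta,z_\theta),\rho_\theta)=-(B(z_\theta,\rho_\theta),z_\theta)$ and then item 1(a), the right-hand side is bounded by $\theta C_1|z_\theta|^2\|\rho_\theta\|$, which after a Young inequality is absorbed as $\tfrac{\nu}{2}\|\rho_\theta\|^2+\tfrac{\theta^2 C_1^2}{2\nu}|z_\theta|^4$. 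Integrating in time and invoking the uniform bound on $z_\theta$ gives $\sup_{[0,T]}|\rho_\theta|^2+\nu\int_0^T\|\rho_\theta\|^2\le C\,\theta^2\to 0$, so the difference quotient $z_\theta$ converges to $w$ in $C([0,T],H)\cap L^2([0,T],V)$. This is precisely the asserted Gateaux differentiability with $\frac{Du_g}{Dg}$ represented by $w$.

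The main obstacle is the treatment of the quadratic remainder in the $\rho_\theta$-equation: one must simultaneously extract the factor $\theta$ (which the identity $u_\theta-u=\theta z_\theta$ supplies) and control the leftover $B(z_\theta,z_\theta)$ uniformly in $\theta$. The uniform-in-$\theta$ energy bound on $z_\theta$ established in the second step is exactly what makes this control possible; everything else rests on the antisymmetry relations in items 4 and 5 of Theorem \ref{prop b}, which conveniently annihilate all the borderline nonlinear terms appearing in the energy identities.
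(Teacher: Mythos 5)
Your proof is correct and is essentially the paper's argument in rescaled form: your equation for $\rho_\theta = z_\theta - w$ with source $\theta B(z_\theta,z_\theta)$ is exactly the paper's remainder equation \eqref{e7} divided by $\theta$ (there the quadratic remainder is written as $Q = B(\hat{u},\hat{u})$ via property 6 of Theorem \ref{prop b}), and both proofs then combine an order-$|\theta|$ bound on $u_{g+\theta h}-u_g$ with an energy estimate in which the borderline nonlinear terms are annihilated by properties 4 and 5. One small slip worth noting: your appeal to Theorem \ref{strong} is not justified in this setting (it requires $f\in L^{\infty}([0,T],H)$, whereas here $f\in L^2([0,T],V')$), but this is harmless because Theorem \ref{weak} already supplies the $L^{\infty}([0,T],H)\cap L^{2}([0,T],V)$ regularity that your estimates actually use.
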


\begin{proof}
Let us fix $u^0 \in V$ and $g,h \in L^2([0,T],H).$\\
We have to prove that $g \rightarrow u_g$ is gateaux differentiable  and the derivative in the direction of $h$ is $\langle \left(\frac{Du_g}{Dg}\right), h\rangle$. So it is enough to prove that 
\begin{equation}\label{e45}
\lim \limits_{|\lambda| \rightarrow 0} \left(\frac{|{u_{g+\lambda h}} - u_g - \lambda w|}{|\lambda|} \right)  =  0.  
\end{equation}
Set $z =  u_{g+\lambda h} - u_g - \lambda w$. 
$$\frac{dz}{dt} = \frac{du_{g+\lambda h}}{dt} - \frac{du_g}{dt} - \lambda  \frac{dw}{dt} $$
Using \eqref{e4} and \eqref{e5} we get, 
\begin{align*}
\frac{dz}{dt} &= -\nu Au_{g+\lambda h} - B(u_{g+\lambda h}) +f +g+ \lambda h + \nu Au_g + B(u_g) -f - g \\
&- \lambda (-\nu Aw - B'(u_g)w + h)\\ 
&= -\nu Az  - B(u_{g+\lambda h}) + B(u_g) + \lambda B'(u_g)w.
\end{align*}
So $z$ satisfies
\begin{equation}\label{e33}
\frac{dz}{dt}  +\nu Az + B(u_{g+\lambda h}) - B(u_g) - \lambda B'(u_g)w  =  0 , \ \ z(0)=0.
\end{equation} 
Define the operatore $Q:[0,T] \rightarrow V'$,  
$$Q(t) = B(u_{g+\lambda h}) - B(u_g) - B'(u_g)(u_{g+\lambda h} - u_g)  .$$ 
From \eqref{e33} we have $z$ is the solution of 
\begin{equation}\label{e7}
\frac{dz}{dt}  +\nu Az + B'(u_g)z = -Q(t), \  \  z(0)=0
\end{equation}
and
$$\| Q(t) \|_{V'} = \|B(u_{g+\lambda h}) - B(u_g) - B'(u_g)(u_{g+\lambda h} - u_g) \|_{V'}.$$
Using 2. and 6. of Theorem 2.1 we estimate
\begin{eqnarray}\label{e6}
\| Q(t) \|_{V'} \leq C|u_{g+\lambda h} - u_g|^2 \leq C\|u_{g+\lambda h} - u_g\|^2.
\end{eqnarray}
Since \eqref{e6} holds for all $t \in [0,T]$,
$$ | Q|_{L^2([0,T],V')} \leq C |u_{g+\lambda h} - u_g|_{L^2([0,T],V)}^2 .$$
From \eqref{e7} we get, 
\begin{equation}\label{e8}
| z|_{L^2([0,T],V)} \leq | Q|_{L^2([0,T],V')} \leq C |u_{g+\lambda h} - u_g|_{L^2([0,T],V)}^2.
\end{equation}
If we show that $$\|(u_{g+\lambda h} - u_g)\| \leq C|\lambda| ,$$ then proof is done. For, set $\hat{u} = u_{g+\lambda h} - u_g.$ So $\hat{u}$ is the solution of
$$ \frac{d\hat{u}}{dt} + \nu A\hat{u} + B(u_g,\hat{u})+ B(\hat{u},u_g) + B(\hat{u}) = \lambda h \ , \ \hat{u}(0)=0.$$ 
We know that, $\hat{u} \in L^2([0,T],V) $ and by Gronwall's lemma we get,
$$\| \hat{u} \| \leq |\lambda| \|h\|_{L^2([0,T],V)} \leq C|\lambda|. $$ 
Therefore by \eqref{e8} we get,
$$| z|_{L^2([0,T],V)} \leq C|\lambda|^2$$
Now dividing by $\lambda$ and sending $\lambda \rightarrow 0$, \eqref{e45} is proved. So we have shown that, the Gateaux derivative of $g \rightarrow u_g$ in the direction of $h$ denoted by $w$ satisfies the equation \eqref{e5} and also $w \in {L^2([0,T],V)} $.
\end{proof}

\begin{lem}\label{equa}
Let, $h_1 \in  L^2([0,T],H)$ and $w_{h_1}$ be the solution of \eqref{e5} i.e.
$$\frac{dw}{dt} + \nu Aw + B'(u_g)w = h_1 , \ \ w(0)=0.$$
Then for all $h_2 \in  L^2([0,T],H)$ we have
$$\int_0^T (h_2,w_{h_1})dt = \int_0^T (\tilde{w}_{h_2},h_1)dt$$
where $\tilde{w}_{h_2}$ is the solution of the adjoint linearized problem
\begin{equation}\label{e9}
-\frac{d\tilde{w}}{dt} + \nu A^*\tilde{w} + B'(u_g)^* \tilde{w} = h_2 , \ \ \tilde{w}(T)=0.
\end{equation}
\end{lem}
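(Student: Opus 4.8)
The plan is to prove this duality identity by the standard adjoint-pairing argument: I would test the adjoint equation \eqref{e9} against the primal solution $w_{h_1}$, integrate over $[0,T]$, move the time derivative by integration by parts, and transfer each spatial operator onto the other factor via its adjoint. Before carrying this out, I would record that the backward problem \eqref{e9} is well-posed and that $\tilde{w}_{h_2}$ has the same regularity as $w$, namely $\tilde{w}_{h_2} \in L^2([0,T],V)$ with $\frac{d\tilde{w}}{dt} \in L^2([0,T],V')$. Since \eqref{e9} is linear and backward in time, the substitution $t \mapsto T-t$ converts it into a forward linearized problem of the same type as \eqref{e5}, whose solvability follows exactly as in Theorem \ref{def w}; here one uses that $A^* = A$ (as $A$ is diagonal with real entries $k_j^2$, hence self-adjoint for the sesquilinear inner product on $H$) and that $B'(u_g)^*$ is the bounded operator identified in property 7 of Theorem \ref{prop b}.

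Starting from $\int_0^T (h_2, w_{h_1})\,dt$ and substituting $h_2 = -\frac{d\tilde{w}}{dt} + \nu A^*\tilde{w} + B'(u_g)^*\tilde{w}$, I would treat the three resulting contributions separately. For the time-derivative term I integrate by parts,
\[
\int_0^T \Bigl(-\tfrac{d\tilde{w}}{dt}, w\Bigr)dt = -\bigl[(\tilde{w},w)\bigr]_0^T + \int_0^T \Bigl(\tilde{w}, \tfrac{dw}{dt}\Bigr)dt,
\]
and the boundary contribution vanishes because $w(0)=0$ and $\tilde{w}(T)=0$. For the viscous term I use $(\nu A^*\tilde{w}, w) = (\nu\tilde{w}, Aw)$, and for the nonlinear term the defining adjoint relation $(B'(u_g)^*\tilde{w}, w) = (\tilde{w}, B'(u_g)w)$ from property 7 of Theorem \ref{prop b}. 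Recombining the three pieces gives
\[
\int_0^T (h_2, w_{h_1})\,dt = \int_0^T \Bigl(\tilde{w}, \tfrac{dw}{dt} + \nu Aw + B'(u_g)w\Bigr)dt = \int_0^T (\tilde{w}_{h_2}, h_1)\,dt,
\]
where the last equality is exactly \eqref{e5}. This is the claimed identity.

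The main obstacle I anticipate is justifying the integration by parts in time rigorously, i.e. that $t \mapsto (\tilde{w}(t), w(t))$ is absolutely continuous with $\frac{d}{dt}(\tilde{w},w) = (\frac{d\tilde{w}}{dt}, w) + (\tilde{w}, \frac{dw}{dt})$ holding almost everywhere. I would handle this with the Lions–Magenes lemma: since $w, \tilde{w} \in L^2([0,T],V)$ with time derivatives in $L^2([0,T],V')$, both lie in $C([0,T],H)$ and the product rule holds with the spatial products read as $V'$–$V$ dualities, which is precisely the setting in which $(\nu A^*\tilde{w}, w)$ and $(B'(u_g)^*\tilde{w}, w)$ are finite. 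A secondary point worth flagging is the complex, sesquilinear nature of $(\cdot,\cdot)$ on $H = l^2(\mathbb{C})$: the adjoints invoked must be the Hilbert adjoints for this convention, so that $(A^*)^* = A$ and $(B'(u_g)^*)^* = B'(u_g)$; this does not alter the structure of the computation but should be stated explicitly when the operator-adjoint identities are applied.
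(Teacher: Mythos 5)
Your proposal is correct and follows essentially the same argument as the paper: substitute the adjoint equation for $h_2$, integrate by parts in time (the boundary terms vanishing since $w(0)=0$ and $\tilde{w}(T)=0$), transfer $\nu A^*$ and $B'(u_g)^*$ onto $w$ via the adjoint relations, and recombine using \eqref{e5}. The extra points you raise --- well-posedness of the backward problem by time reversal and the Lions--Magenes justification of the integration by parts --- are refinements of steps the paper performs without comment, not a different route.
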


\begin{proof}
\begin{align}
\int_0^T (h_2,w_{h_1})dt &=  \int_0^T (-\frac{d\tilde{w}}{dt} + \nu A^*\tilde{w} + B'(u_g)^* \tilde{w} , w)dt \nonumber \\
 &= \int_0^T [(-\frac{d\tilde{w}}{dt},w) + (\nu A^*\tilde{w},w) + (B'(u_g)^* \tilde{w},w)]dt \nonumber \\
 &= \int_0^T (-\frac{d\tilde{w}}{dt},w)dt + \int_0^T (\nu A^*\tilde{w},w)dt + \int_0^T (B'(u_g)^* \tilde{w},w)dt \nonumber \\
 &= \int_0^T (\tilde{w},\frac{dw}{dt})dt +(\tilde{w}(T),w(T)) - (\tilde{w}(0),w(0)) + \nu \int_0^T (\tilde{w},Aw)dt \nonumber \\
 &+ \int_0^T ( \tilde{w} , B'(u_g) w)dt. \nonumber 
\end{align}
Since $w(0)=0$ and $\tilde{w}(T)=0$ we get
 
$$\int_0^T (h_2,w_{h_1})dt = \int_0^T [(\tilde{w} , \frac{dw}{dt} + Aw + B'(u_g)w)]dt $$
which implies
$$\int_0^T (h_2,w_{h_1})dt = \int_0^T(\tilde{w}_{h_2} ,h_1)dt .$$
\end{proof}

Now, we will characterize the optimal control of the problem \eqref{31} i.e. $\bar{g}$ in terms of adjoint of the linearized problem. We state and prove the following theorem.

\begin{thm}
Let $(\bar{u},\bar{g})$ be an optimal pair for the control problem \eqref{31}, then the following equality holds $$\bar{g} + \tilde{w}_h= 0,$$ where $h=A \bar{u}$ and $\tilde{w}_h$ is the solution of the linearized adjoint problem 
\begin{equation*}
-\frac{d\tilde{w}}{dt} + \nu A^*\tilde{w} + B'(u_g)^* \tilde{w} = A\bar{u}, \ \ \tilde{w}(T)=0.
\end{equation*}
\end{thm}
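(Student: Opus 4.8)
The plan is to derive the stated identity from the first-order optimality condition, using the sensitivity equation of Theorem \ref{def w} to compute the derivative of the cost and the duality identity of Lemma \ref{equa} to pass from the linearized state to the adjoint state. Since the control $g$ ranges over all of $L^2([0,T],H)$ with no pointwise constraint, optimality of $(\bar u,\bar g)$ means that the reduced functional $\mathcal J(g):=J_1(u_g,g)$ has vanishing Gateaux derivative at $\bar g$ in every direction $h\in L^2([0,T],H)$. Throughout I work in the regime $u^0\in V$ of Theorem \ref{def w}, where the solution is strong, so that $\bar u\in L^2([0,T],D(A))$ and hence $A\bar u\in L^2([0,T],H)$; this is what makes $A\bar u$ an admissible right-hand side for both Lemma \ref{equa} and the adjoint problem \eqref{e9}.

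First I would compute $\frac{d}{d\lambda}\big|_{\lambda=0}\mathcal J(\bar g+\lambda h)$. The control penalty is immediate: differentiating $\frac12\int_0^T|\bar g+\lambda h|^2\,dt$ gives $\mathrm{Re}\int_0^T(\bar g,h)\,dt$. For the tracking term $\frac12\int_0^T|A^{1/2}u_g|^2\,dt$ I would use the chain rule together with Theorem \ref{def w}: the directional derivative of $g\mapsto u_g$ along $h$ is the solution $w=w_h$ of the linearized equation \eqref{e5} (with $u_g=\bar u$ at the optimum), and since $\frac12|A^{1/2}u|^2=\frac12(Au,u)$ is quadratic, its derivative is $\mathrm{Re}\,(A^{1/2}\bar u,A^{1/2}w_h)=\mathrm{Re}\,(A\bar u,w_h)$ pointwise in $t$. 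Collecting the two contributions, the optimality condition reads
\begin{equation*}
\mathrm{Re}\int_0^T\big[(\bar g,h)+(A\bar u,w_h)\big]\,dt=0\qquad\forall\,h\in L^2([0,T],H).
\end{equation*}

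Next I would invoke Lemma \ref{equa} with $h_1=h$ and $h_2=A\bar u$ to rewrite the state-dependent term as $\int_0^T(A\bar u,w_h)\,dt=\int_0^T(\tilde w_{A\bar u},h)\,dt$, where $\tilde w_{A\bar u}$ solves the adjoint problem \eqref{e9} with right-hand side $A\bar u$; this is exactly the $\tilde w_h$ (with $h=A\bar u$) of the statement and, crucially, it does not depend on the variation direction. Substituting gives $\mathrm{Re}\int_0^T(\bar g+\tilde w_{A\bar u},h)\,dt=0$ for every $h$. Because $L^2([0,T],H)$ is a complex space and $h$ may be replaced by $ih$, the vanishing of the real part in both directions forces $\int_0^T(\bar g+\tilde w_{A\bar u},h)\,dt=0$ for all $h$; the fundamental lemma of the calculus of variations then yields $\bar g+\tilde w_{A\bar u}=0$, i.e. $\bar g+\tilde w_h=0$ with $h=A\bar u$.

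The step I expect to require the most care is the rigorous justification of the chain-rule computation of the tracking term, i.e. interchanging the limit in $\lambda$ with the time integral and with the quadratic form generated by $A$. This rests on the Gateaux differentiability established in Theorem \ref{def w} and, in particular, on the estimates $\|u_{\bar g+\lambda h}-\bar u\|\le C|\lambda|$ and $w_h\in L^2([0,T],V)$ proved there, which control the remainder uniformly in $\lambda$. The other delicate point is the bookkeeping of the complex structure: one must consistently take real parts when differentiating the real-valued $|\cdot|^2$ and then recover the full complex identity by also varying over $ih$. All remaining manipulations (the integration by parts in the adjoint identity and the boundedness of $B'(u_g)^*$) are already supplied by Lemma \ref{equa} and Theorem \ref{prop b}.
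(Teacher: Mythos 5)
Your proposal is correct and follows essentially the same route as the paper's own proof: compute the Gateaux derivative of the reduced functional, identify the state sensitivity with the linearized solution $w$ via Theorem \ref{def w}, convert $\int_0^T (A\bar u, w_h)\,dt$ into $\int_0^T (\tilde w_{A\bar u}, h)\,dt$ via Lemma \ref{equa} with $h_2 = A\bar u$, and conclude from arbitrariness of the direction. Your two added precautions --- working with real parts and recovering the full complex identity by varying over $ih$, and requiring $u^0\in V$ so that $A\bar u\in L^2([0,T],H)$ is an admissible right-hand side --- are points the paper glosses over, but they refine rather than change the argument.
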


\begin{proof}
Let $(\bar{u},\bar{g})$ be the optimal pair for the control problem. \\
Let $F(g)=J(u_g,g).$ \\
Then we have 
\begin{align*}
F(\bar{g}+ \lambda g) - F(\bar{g}) &= J(u_{\bar{g}+\lambda g}, \bar{g}+ \lambda g)- J(u_{\bar{g}}, \bar{g})  \\       
&= \frac{1}{2} \int_0^T  |{\bar{g} + \lambda g}|^2 dt + \frac{1}{2} \int_0^T  | A^{1/2} u_{\bar{g}+ \lambda g}|^2 dt  \\
&- \frac{1}{2} \int_0^T | \bar{g}|^2 dt - \frac{1}{2} \int_0^T | A^{1/2} u_{\bar{g}}|^2 dt  \\
&= \frac{1}{2} \int_0^T  ({\bar{g} + \lambda g},{\bar{g} + \lambda g}) dt - \frac{1}{2} \int_0^T (\bar{g},\bar{g}) dt  \\
& + \frac{1}{2} \int_0^T  | A^{1/2} u_{\bar{g}+ \lambda g}|^2 dt - \frac{1}{2} \int_0^T | A^{1/2} u_{\bar{g}}|^2 dt, 
\end{align*}
which implies,
\begin{align*}
&= \frac{1}{2} \int_0^T (\bar{g},\bar{g}) dt + \frac{1}{2} \int_0^T (\bar{g},\lambda g) dt + \frac{1}{2} \int_0^T (\lambda g,\bar{g}) dt + \frac{1}{2} \int_0^T (\lambda g,\lambda g) dt \\
&- \frac{1}{2} \int_0^T (\bar{g},\bar{g}) dt  + \frac{1}{2} \int_0^T | A^{1/2} u_{\bar{g}+ \lambda g}|^2 dt - \frac{1}{2} \int_0^T | A^{1/2}  u_{\bar{g}}|^2 dt 
\end{align*}
By \cite{sri book}[Chapter 5, Page 99], considering $Q_m $ as $A^{1/2}$ we get, 
\begin{align*}
&= \frac{1}{2} \int_0^T  2 \lambda (g,\bar{g}) + \frac{1}{2} \int_0^T \lambda^2 (g,g)  +  \int_0^T \left( A^{1/2}(u_{\bar{g}})  ,  A^{1/2} \left( \frac{Du_{\bar{g}}}{D\bar{g}} \cdot \lambda g \right)  \right) \\
&= \int_0^T   \lambda (g,\bar{g}) + \frac{1}{2} \int_0^T \lambda^2 (g,g)  + \lambda \int_0^T  \left( A^{1/2} u_{\bar{g}} , A^{1/2} \left( \frac{Du_{\bar{g}}}{D\bar{g}} \cdot g \right) \right).
\end{align*}
Dividing by $\lambda$ and taking $\lambda \rightarrow 0$ we finally obtain,
\begin{eqnarray}
0 \leq F'(\bar{y}) \cdot g &=& \underset{\lambda \rightarrow 0}{\lim} \frac{F(\bar{g}+ \lambda g)- F(\bar{g})}{\lambda} \nonumber \\
&=& \int_0^T  (g,\bar{g}) + \int_0^T \left( A^{1/2} u_{\bar{g}} , A^{1/2} \left( \frac{Du_{\bar{g}}}{D\bar{g}} \cdot g \right) \right) \nonumber \\
\mbox{i.e.  } 0 \leq J_1'(\bar{g}) \cdot g &=& \int_0^T  (g,\bar{g}) + \int_0^T \left( A^{1/2} u_{\bar{g}} , A^{1/2} \left( \frac{Du_{\bar{g}}}{D\bar{g}} \cdot g \right) \right). \nonumber
\end{eqnarray}                                                                                                                                                                                                                                                                                                                                                                                                                                                                                                                                                                                                                                                                                                                                                                                                                                                                                                                                                                                                                                                                                                                                                                                                                                                                                                                 
Similarly, if we take the directional derivative of $J$ in the direction of $-g$ , we will get,                                                                                                                                                                                                                                                                                    
\begin{equation*}                                                                                                                                                                                                                                                                                                                                                                                                                                                                  
J_1'(\bar{g}) \cdot g \leq 0.
\end{equation*}
Hence,
\begin{eqnarray*}
J_1'(\bar{g}) \cdot g &=& 0  \\
\int_0^T  (g,\bar{g}) + \int_0^T  \left( A^{1/2} u_{\bar{g}} , A^{1/2} \left( \frac{Du_{\bar{g}}}{D\bar{g}} \cdot g \right) \right) &=& 0.  
\end{eqnarray*}
By theorem \ref{def w} we can write $\frac{Du_{\bar{g}}}{D\bar{g}} \cdot g = w$
\begin{eqnarray}\label{e10}
\int_0^T  (g,\bar{g}) + \int_0^T  \left( A^{1/2} u_{\bar{g}} , A^{1/2} w \right) &=& 0  \nonumber \\
\int_0^T  (g,\bar{g}) + \int_0^T  (A u_{\bar{g}}  ,w) &=& 0.
\end{eqnarray}
Using lemma \ref{equa} we get,
\begin{eqnarray*}
\int_0^T  (g,\bar{g}) + \int_0^T  (\tilde{w}_h , g) &=& 0 
\end{eqnarray*}
where $h= A u_{\bar{g}} = A\bar{u}$. Since the above equality is true for all $g \in L^2([0,T],H)$ we get,
\begin{eqnarray*}
(\bar{g},g) + (\tilde{w}_h,g) &=& 0. 
\end{eqnarray*}
Therefore,
\begin{eqnarray*}
\bar{g} + \tilde{w}_h  &=& 0 
\end{eqnarray*}
i.e.     $\bar{g} = -\tilde{w}_h$  where $h= A \bar{u} $. This completes the proof.
\end{proof}


Now we associate linear quadratic cost with the given control problem and prove the existence of control. For, define
\begin{equation}\label{e34}
J_2(u,g)=\frac{1}{2} \int_0^T  |u-u_d|^2 +\frac{\beta}{2}  \int_0^T |g|^2.
\end{equation}
where $u_d$ is the desired state. Since $g \in L^2([0,T],H)$ and $u,u_d \in C([0,T],H)$, the above cost functional \eqref{e34} is well defined. Then the control problem is defined by
\begin{eqnarray}\label{35}
\inf \{J_2(u,g)\mid (u,g) \in C([0,T],H),L^2([0,T],H) , \mbox{ such that } (u,g) \nonumber \\
 \mbox{ is the solution of } \eqref{e4}\}
\end{eqnarray}
As before we will show that an optimal pair exists and then we will characterize the optimal control.

\begin{thm}
Let $u^0 \in H$ be given. Then there exists at least one $\bar{g} \in L^2([0,T],H)$ and $\bar{u} \in C([0,T],H)\cap L^2([0,T],V)$ such that the functional $J_2(u,g)$ attains its minimum at $(\bar{g},\bar{u})$, where $\bar{u}$ is the solution of $\eqref{e4}$ with control $\bar{g}.$
\end{thm}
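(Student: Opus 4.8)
The plan is to follow the direct method of the calculus of variations, closely paralleling the proof of Theorem~\ref{exis 1}, but with two modifications forced by the different structure of $J_2$. First I would observe that $J_2(u,g)\ge 0$ for every admissible pair, and that the choice $g=0$ produces, by Theorem~\ref{weak}, an admissible pair of finite cost; hence $p:=\inf J_2$ is a finite nonnegative number and there is a minimizing sequence $(u_m,g_m)$, with $u_m$ the solution of \eqref{e4} driven by $g_m$, such that $J_2(u_m,g_m)\to p$. Normalizing so that $J_2(u_m,g_m)\le J_2(u,0)$, the penalty term $\frac{\beta}{2}\int_0^T|g_m|^2$ stays bounded, which yields a uniform bound on $\{g_m\}$ in $L^2([0,T],H)$.

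The first point of departure from Theorem~\ref{exis 1} is that $J_2$ controls only $\int_0^T|u_m-u_d|^2$, i.e.\ the $L^2([0,T],H)$ norm of $u_m$, and not the $L^2([0,T],V)$ norm; so the energy-space bound cannot be read off the cost functional as it was for $J_1$. Instead I would extract it from the equation: taking the $H$ inner product of \eqref{e4} with $u_m$, using $\mathrm{Re}\,(B(u_m,u_m),u_m)=0$ from item~4 of Theorem~\ref{prop b} to annihilate the nonlinear term, and applying Young's inequality to $(f+g_m,u_m)$, one obtains the standard energy inequality and then, via Gronwall, a bound on $\{u_m\}$ in $L^\infty([0,T],H)\cap L^2([0,T],V)$ that is uniform in $m$ because $\{f+g_m\}$ is bounded in $L^2([0,T],H)\subset L^2([0,T],V')$.

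With these bounds I would pass to subsequences so that $g_m\rightharpoonup\bar g$ in $L^2([0,T],H)$ and $u_m\rightharpoonup\bar u$ in $L^2([0,T],V)$, weak-$*$ in $L^\infty([0,T],H)$. The main obstacle, exactly as in Theorem~\ref{exis 1}, is passing to the limit in the quadratic term $B(u_m,u_m)$, where weak convergence alone is insufficient. Here I would use the equation once more to bound $\{\tfrac{du_m}{dt}\}$ in $L^2([0,T],V')$ --- the term $\nu Au_m$ is bounded there by the $L^2([0,T],V)$ bound, and $\|B(u_m,u_m)\|_{V'}\le C_1|u_m|^2$ by item~2 of Theorem~\ref{prop b} --- and then invoke the Aubin--Lions compactness lemma (using the compact embedding $V\hookrightarrow H$ coming from $k_n^2\to\infty$) to upgrade to strong convergence $u_m\to\bar u$ in $L^2([0,T],H)$. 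The decomposition $b(u_m,u_m,v)-b(\bar u,\bar u,v)=b(u_m-\bar u,u_m,v)+b(\bar u,u_m-\bar u,v)$, combined with estimate 2(a)--(b) of Theorem~\ref{prop b} and this strong convergence, then shows the nonlinear term converges, so $(\bar u,\bar g)$ satisfies the weak formulation of \eqref{e4}; that is, $\bar u$ is the solution driven by $\bar g$.

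Finally I would conclude by lower semicontinuity. The maps $g\mapsto\int_0^T|g|^2$ and $u\mapsto\int_0^T|u-u_d|^2$ are convex and strongly continuous, hence weakly (and weak-$*$) lower semicontinuous, so
\begin{equation*}
J_2(\bar u,\bar g)\le\liminf_{m\to\infty}J_2(u_m,g_m)=p.
\end{equation*}
Since $(\bar u,\bar g)$ is admissible, $J_2(\bar u,\bar g)\ge p$ as well, whence equality and the infimum is attained at $(\bar u,\bar g)$. The only genuinely new ingredient beyond the proof of Theorem~\ref{exis 1} is the a~priori energy estimate used to recover the $L^2([0,T],V)$ bound on the minimizing states, since $J_2$, unlike $J_1$, does not penalize $\|u\|$ directly.
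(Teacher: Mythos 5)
Your proposal is correct and follows the same overall route as the paper's proof of this theorem: the direct method, with a minimizing sequence normalized against the admissible pair $(u,0)$, uniform bounds, weak subsequential limits, weak lower semicontinuity, and identification of $\bar u$ as the solution of \eqref{e4} driven by $\bar g$. However, your version is genuinely more complete at two points where the paper is terse or actually gappy. First, the paper asserts that the cost bound alone makes $\{u_m\}$ bounded in $C([0,T],H)\cap L^2([0,T],V)$; as you correctly observe, $J_2$ only controls $\int_0^T|u_m-u_d|^2$, i.e.\ an $L^2([0,T],H)$-type bound, and the $L^2([0,T],V)$ bound must be recovered from the energy estimate for \eqref{e4}, using item 4 of Theorem~\ref{prop b} to cancel the nonlinearity and the uniform $L^2$ bound on $f+g_m$ --- exactly the argument you supply. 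Second, for the limit in the nonlinear term the paper defers to the proof of Theorem~\ref{exis 1}, where the decomposition $b(u_m,u_m,v)-b(\bar u,\bar u,v)=b(u_m-\bar u,u_m,v)+b(\bar u,u_m-\bar u,v)$ is estimated by $C|u_m-\bar u|\left(|u_m|+|\bar u|\right)\|v\|$ and declared to vanish because $\{u_m\}$ is bounded; this implicitly requires strong convergence $u_m\to\bar u$ in $L^2([0,T],H)$, which weak convergence alone does not give. Your insertion of the time-derivative bound in $L^2([0,T],V')$ followed by the Aubin--Lions lemma (legitimate here, since the embedding $V\hookrightarrow H$ is compact because the eigenvalues $k_n^2$ tend to infinity) supplies precisely this missing ingredient. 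In short, your proof is the paper's proof with its two real gaps filled, and it would be the preferable write-up.
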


\begin{proof}
If $g=0$, then by Theorem 3.1 corresponding solution $u$ exists. So $J(u,0)$ exists. Therefore $\inf J_2(u,g)$ exists. \\
Let $\alpha =\inf J(g)$. \\
Since, $0\leq \alpha < \infty$, there exists a minimizing sequence $\{g_m\} \in L^2([0,T],H)$ such that $J_2(u_m,g_m) \rightarrow \alpha $, where $u_m$ is the solution of \eqref{e4} with control $g_m$. \\
w.l.o.g we assume that $J_2(u_m,g_m) \leq J_2(u,0)$. This implies, 
$$\frac{1}{2} \int_0^T  |u_m-u_d|^2 +\frac{\beta}{2} |g_m|^2 \leq \frac{1}{2} \int_0^T  |u-u_d|^2 dt. $$
Since $u,u_d \in C([0,T],H)$, so $(u-u_d) \in C([0,T],H)$. Which gives
$$\frac{\beta}{2} \int_0^T |g_m|^2 dt \leq \frac{1}{2} \int_0^T  |u-u_d|^2 dt < \infty .$$
Similarly we get,
$$\frac{1}{2} \int_0^T  |u_m-u_d|^2  \leq \frac{1}{2} \int_0^T  |u-u_d|^2 dt < \infty.$$
Therefore, $\{g_m\}$ is bounded in $L^2([0,T],H)$ and $\{u_m\} $ is bounded in $ C([0,T],H) \cap L^2([0,T],V)$. 
So we can find subsequences indexed by let's say $\{g_m\}$ and $\{u_m\}$ such that $g_m \rightharpoonup  \bar{g}$ in $ L^2([0,T],H)$ and $u_m \rightharpoonup \bar{u}$ in $ C([0,T],H) \cap L^2([0,T],V)$. Therefore
$$ \int_0^T |\bar{g}|^2 dt \leq \liminf \limits _{m\rightarrow \infty} \int_0^T |g_m|^2 dt $$
and $$\int_0^T |\bar{u} - u_d|_H^2 dt \leq \liminf \limits _{m\rightarrow \infty} \int_0^T |u_m - u_d|_H^2 dt . $$ 
Adding the above two inequalities we get,
$$ J_2(\bar{u},\bar{g}) \leq \liminf \limits _{m\rightarrow \infty} J_2(u_m,g_m)\ = \ \alpha.$$
Hence,   $J_2(\bar{u},\bar{g}) \ = \ \alpha.$ \\
Now we have to show that $\bar{u}$ is the solution of $\eqref{e4}$ with control $\bar{g}$. This part of the proof is exactly similar to part of the proof of the theorem \ref{exis 1} and we skip the details here. 
\end{proof}

The next theorem characterizes the optimal control in terms of solution of the adjoint equation.

\begin{thm}
Let $(\bar{u},\bar{g})$ be an optimal pair for the control problem \eqref{35}, then the optimal control $\bar{g}$ is given by $$\bar{g} = -\frac{1}{\beta} \tilde{w}_h$$ 
where $h=\bar{u} - u_d$ and $\tilde{w}_h$ is the solution of the linearised adjoint problem 
\begin{equation*}
-\frac{d\tilde{w}}{dt} + \nu A^*\tilde{w} + B'(u_g)^* \tilde{w} = \bar{u} - u_d  \ , \ \tilde{w}(T)= \bar{u}(T) - u_d(T).
\end{equation*}
\end{thm}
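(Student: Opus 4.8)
The plan is to mirror the argument already used to characterize the $J_1$-optimal control, replacing the vorticity term $|A^{1/2}u|^2$ by the tracking term $|u-u_d|^2$ and carrying the weight $\beta$ through the computation. First I would set $F(g)=J_2(u_g,g)$ and compute its one-sided directional derivative at the optimal control $\bar g$ by expanding
\begin{align*}
F(\bar{g} + \lambda g) - F(\bar{g}) &= \frac{\beta}{2}\int_0^T \left( (\bar{g}+\lambda g, \bar{g}+\lambda g) - (\bar{g},\bar{g}) \right) dt \\
&\quad + \frac{1}{2}\int_0^T \left( |u_{\bar{g}+\lambda g} - u_d|^2 - |u_{\bar{g}} - u_d|^2 \right) dt .
\end{align*}
The control term expands to $\beta\lambda\int_0^T (g,\bar g)\,dt + \tfrac{\beta\lambda^2}{2}\int_0^T (g,g)\,dt$. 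For the tracking term I would invoke Theorem \ref{def w}: since $g\mapsto u_g$ is Gateaux differentiable with derivative $w=\frac{Du_{\bar g}}{D\bar g}\cdot g$ solving the linearized system \eqref{e5}, the tracking term is differentiable in $\lambda$ and its leading contribution is $\lambda\int_0^T (\bar u - u_d, w)\,dt$.

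Next I would divide by $\lambda$ and let $\lambda\to 0$ to obtain
$$F'(\bar{g})\cdot g = \beta \int_0^T (g,\bar{g})\,dt + \int_0^T (\bar{u} - u_d, w)\,dt .$$
Optimality of $(\bar u,\bar g)$ forces $F'(\bar g)\cdot g\ge 0$ for every admissible direction $g$; replacing $g$ by $-g$ yields the reverse inequality, so $F'(\bar g)\cdot g=0$ for all $g\in L^2([0,T],H)$. The decisive reduction is then Lemma \ref{equa}, applied with $h_1=g$ (so that $w_{h_1}=w$) and $h_2=\bar u - u_d=:h$; this converts the distributed tracking integral into a pairing against the adjoint state, $\int_0^T (\bar u - u_d, w)\,dt = \int_0^T (\tilde w_h, g)\,dt$. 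Substituting gives $\beta(\bar g,g)+(\tilde w_h,g)=0$ for every $g$, whence $\beta\bar g+\tilde w_h=0$, that is $\bar g=-\tfrac1\beta\tilde w_h$, with $u_g=\bar u$ in the operator $B'(u_g)^*$.

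The step that requires the most care is the terminal data in the adjoint equation. Lemma \ref{equa} is stated with the homogeneous terminal condition $\tilde w(T)=0$, and that is precisely what the integration by parts delivers when the state is penalized only through the distributed term $\int_0^T |u-u_d|^2$. I would therefore have to reconcile this with the nonhomogeneous terminal condition $\tilde w(T)=\bar u(T)-u_d(T)$ appearing in the statement: the nonzero terminal value arises exactly when one keeps the surviving boundary contribution $(\tilde w(T),w(T))$ from the integration by parts, which corresponds to an additional endpoint observation in the cost. I would make this dependence explicit, either invoking Lemma \ref{equa} directly in the homogeneous case or carrying the boundary term through so that it is absorbed into the terminal condition consistently. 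The remaining technical points, namely the Gateaux differentiability of the tracking term and the justification of passing to the limit inside the integrals, follow from Theorem \ref{def w} together with the boundedness estimates of Theorem \ref{prop b}.
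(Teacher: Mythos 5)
Your proposal is correct and follows essentially the same route as the paper's own proof: the same quadratic expansion of $F(\bar g + \lambda g) - F(\bar g)$, the same use of Theorem \ref{def w} to identify $u_{\bar g + \lambda g} - u_{\bar g}$ with $\lambda w_g$, the same two-sided optimality argument forcing $F'(\bar g)\cdot g = 0$, and the same reduction via Lemma \ref{equa}. Your closing observation about the terminal condition is well taken and points to a discrepancy in the paper itself rather than a gap in your argument: the paper's proof silently invokes Lemma \ref{equa}, whose adjoint state satisfies $\tilde w(T)=0$, and never reconciles this with the nonhomogeneous condition $\tilde w(T)=\bar u(T)-u_d(T)$ stated in the theorem; since $J_2$ contains no terminal observation term, the homogeneous condition is the one consistent with the proof.
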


\begin{proof}
Let, $(\bar{u},\bar{g})$ be the optimal pair for the control problem \eqref{35}. \\
Let, $F(g)=J_2(u_g,g).$ \\
Then we have, 
\begin{align*}
F(\bar{g}+ \lambda g) - F(\bar{g}) &= J_2(u_{\bar{g}+\lambda g}, \bar{g}+ \lambda g)- J_2(u_{\bar{g}}, \bar{g})  \\       
&= \frac{1}{2} \int_0^T  |u_{\bar{g}+\lambda g}-u_d|^2 + \frac{\beta}{2} \int_0^T  |{\bar{g} + \lambda g}|^2 dt -\frac{1}{2} \int_0^T  |u_{\bar{g}} -u_d|^2 \\
&+ \frac{\beta}{2} \int_0^T  | \bar{g}|^2 dt  \\
&= \frac{1}{2}\int_0^T (u_{\bar{g}+\lambda g} - u_d,u_{\bar{g}+\lambda g} - u_d) + \frac{\beta}{2} \int_0^T (\bar{g} + \lambda g,\bar{g} + \lambda g) \\
&- \frac{1}{2}\int_0^T (u_{\bar{g}} -u_d,u_{\bar{g}} -u_d) + \frac{\beta}{2} \int_0^T (\bar{g},\bar{g})\\
&= \frac{1}{2}\int_0^T (u_{\bar{g}+\lambda g} - u_{\bar{g}},u_{\bar{g}+\lambda g} + u_{\bar{g}} -2u_d) + \frac{\beta}{2} \int_0^T  2\lambda (g,\bar{g}) \\
&+ \frac{\beta}{2} \int_0^T \lambda^2 (g,g) \\
&= \frac{1}{2}\int_0^T (u_{\bar{g}+\lambda g} - u_{\bar{g}},u_{\bar{g}+\lambda g} - u_{\bar{g}}) + \int_0^T (u_{\bar{g}+\lambda g} - u_{\bar{g}},u_{\bar{g}} -u_d) \\ 
&+ \beta \int_0^T \lambda (g,\bar{g}) + \frac{\beta}{2} \int_0^T \lambda^2 (g,g)   \\
&= \frac{1}{2}\int_0^T  |u_{\bar{g}+\lambda g} - u_{\bar{g}}|^2 + \int_0^T (u_{\bar{g}+\lambda g} - u_{\bar{g}},u_{\bar{g}} -u_d) + \frac{\beta}{2} \int_0^T  \lambda^2 (g,g) \\
&+  \beta \int_0^T  \lambda (g, \bar{g}). 
\end{align*}
We know that $u_{\bar{g}+\lambda g} - u_{\bar{g}} = \langle \frac{Du_{\bar{g}}}{D\bar{g}} , \lambda g \rangle$. By theorem \ref{def w} we can write $u_{\bar{g}+\lambda g} - u_{\bar{g}} = \lambda w_g$, which gives, 
$$F(\bar{g}+ \lambda g) - F(\bar{g}) = \frac{{\lambda}^2}{2}\int_0^T |w_g|^2 + \int_0^T \lambda (w_g,u_{\bar{g}} -u_d) + \frac{\beta}{2} \int_0^T  \lambda^2 (g,g) +  \beta \int_0^T  \lambda (g ,\bar{g})$$
Dividing by $\lambda$ and taking $\lambda \rightarrow 0$ we obtain,
\begin{eqnarray*}
0 \leq F_1'(\bar{g}) \cdot g &=& \underset{\lambda \rightarrow 0}{\lim} \frac{F(\bar{g}+ \lambda g)- F(\bar{g})}{\lambda} \\
&=& \int_0^T  (w_g,u_{\bar{g}} -u_d) +\beta \int_0^T (g,\bar{g}).  
\end{eqnarray*}                                                                                                                                                                                                                                                                                                                                                                                                                                                                                                                                                                                                                                                                                                                                                                                                                                                                                                                                                                                                                                                                                                                                                                                                                                                                                                                 
Using Lemma \ref{equa} we get,
$$0 \leq F_1'(\bar{g}) \cdot g = \int_0^T  (g, \tilde{w}_h) +\beta \int_0^T  (g,\bar{g})$$
where $h=u_{\bar{g}} -u_d$ i.e.
$$0 \leq J_1'(\bar{g}) \cdot g = \int_0^T  (g,\tilde{w}_h) +\beta \int_0^T  (g,\bar{g}).$$ 
Similarly if we take the directional derivative of $J$ in the direction of $-g$, we will get,                                                                                                                                                                                                                                                                                    
\begin{equation*}                                                                                                                                                                                                                                                                                                                                                                                                                                                                  
J_1'(\bar{g}) \cdot g \leq 0
\end{equation*}
Hence,
\begin{align*}
J_1'(\bar{g}) \cdot g &= 0  \\
\int_0^T  (g,\tilde{w}_h) +\beta \int_0^T  (g,\bar{g}) &= 0  
\end{align*}
Since the above equality is true for all $g \in L^2([0,T],H)$, we get,
$$\bar{g} = -\frac{1}{\beta} \tilde{w}_h$$
where $h=u_{\bar{g}} -u_d$. This completes the proof.
\end{proof}


\section{Invariant Subspace.} 
In this section our aim is to find feedback controllers such that certain physical properties associated with the flow are preserved. Let $K$ be a given closed convex subset of the state space. We wish to find a controller $g$ such that, whenever an initial value of flow equation lies in $K$ so does the solution of the controlled equation \eqref{e4}. In many cases the controller $g$ which preserves flow in $K$ lies in the normal cone to $K$. Under some assumptions on $K$ we show that it is possible to find flow preserving feedback controllers. Later on for specific examples of $K$ we show how these theorems can be applied. Our proofs use the theory of nonlinear partial differential equation associated to maximal monotone operator. \\
In the following theorem, we assume that $K$ is invariant under the operator $(I+\lambda K)^{-1}$. This assumption helps us to get the control which belongs to the normal cone to the set $K$, so that $K$ is invariant with respect to initial value. \\
Before stating the theorem we define necessary terms.

\begin{defi}\textbf{Normal Cone.} 
Let $K$ be a non-empty convex subset of a  Hilbert space $H$ and $x \in K$. Then the normal cone to $K$ at $x$ is defined as $N_K(x)=\{y\in H : ( y,x - z ) \geq 0 \ \forall z \in K \},$ as shown in the figure.
\end{defi}

\begin{defi}\textbf{Quasi m-accretive.}\label{QM}
 An operator $A:H \rightarrow H$ is called accretive if for every pair $[x_1 , y_1],[x_2 , y_2] \in H \times H$,  $(x_1 - x_2,y_1 - y_2) \geq 0$ holds.\\
$A$ is called m-accretive if $R(\lambda I+ A) = H  \ \ \forall \ \lambda>0$. \\
The operator $A$ is called quasi m-accretive if $(A+\mu I)$ is m-accretive for some $\mu>0$.
\end{defi}

\begin{thm}\label{thm 1}
 Let $K$ be a closed convex subset of $H$ such that $0 \in K$ and $(I+\lambda A)^{-1} K \subset K \  \forall \ \lambda > 0$. Let $u^0 \in D(A)\cap K$ and $f \in W^{1,1}([0,T],H) \cap L^{2}([0,T],H)$. Then, there exists a feedback controller $g \in L^ \infty ([0,T],H)$ and $g(t) \in N_K(u(t))$ a.e. $t \in [0,T]$ such that the corresponding solution to the closed loop system
\begin{equation}\label{e36}
\frac{du}{dt}+\nu Au+B(u,u)=f+g , \ \ u(0)=u^0
\end{equation} 
satisfies $u \in W^{1,\infty}([0,T],H) \cap L^ \infty ([0,T],K \cap D(A)) \cap C([0,T],V).$ Moreover,
 \begin{equation}\label{e11}
\frac{d^+ u(t)}{dt} + (-f(t)+ \nu Au(t) + B(u(t) + N_K(u(t)))^0 = 0 , u(0)=u^0
 \end{equation}
and $u(t) \in K \   \forall \ t\in [0,T].$ \\
Here $N_K(u(t)):= \{w\in H ; (w,u-z) \geq 0 , \forall z \in K\}$ is the normal cone to $K$ at $u(t)$. $u \rightarrow (-f(t)+ \nu Au(t) + B(u(t)) + N_K(u(t)))$ is a multivalued map. Define $(-f(t)+ \nu Au(t) + B(u(t)) + N_K(u(t)))^0$ to be the projection of origin on $(-f(t)+ \nu Au(t) + B(u(t)) + N_K(u(t)))$ which is of minimum norm. \\
So by \eqref{e11} we can deduce the feedback controller $g$ is given by,
 \begin{align}\label{e37}
 g(t)= -f(t) &+ \nu Au(t) + B(u(t))) + N_K(u(t)) \nonumber \\
 &-  (-f(t)+ \nu Au(t) + B(u(t))) + N_K(u(t)))^0  \ \ \ \forall \ t \in [0,T). 
 \end{align}
\end{thm}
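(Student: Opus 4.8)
The plan is to rewrite the closed-loop system \eqref{e36} as a single evolution equation governed by a multivalued quasi m-accretive operator and then appeal to the nonlinear semigroup theory used in \cite{BS}. Introduce on $H$ the operator
$$\mathcal{A}u = \nu Au + B(u,u) + N_K(u), \qquad D(\mathcal{A}) = D(A)\cap K,$$
which is multivalued through the normal-cone term, so that \eqref{e11} becomes $\tfrac{d^+u}{dt} + (\mathcal{A}u - f(t))^0 = 0$. The invariance of $K$ is then automatic: $N_K(u(t))$ is non-empty only when $u(t)\in K$, so any solution of the inclusion $\tfrac{du}{dt}+\mathcal{A}u \ni f$ is forced to stay in $K$. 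The whole problem thus reduces to generating a strong solution of this inclusion, and I would do this by establishing that $\mathcal{A}$ is quasi m-accretive in the sense of Definition \ref{QM}.

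First I would check accretivity. Take $u_1,u_2\in D(A)\cap K$, selections $\eta_i\in N_K(u_i)$, and form $y_i=\nu Au_i+B(u_i,u_i)+\eta_i$; with $w=u_1-u_2$ the bilinearity of $B$ gives $B(u_1,u_1)-B(u_2,u_2)=B(w,u_1)+B(u_2,w)$. Taking the real part of the $H$ inner product with $w$, the special structure of the shell nonlinearity collapses the quadratic contribution: part 4 of Theorem \ref{prop b} gives $\mathrm{Re}(B(u_2,w),w)=0$ directly, while part 5(a) followed by part 4 gives $\mathrm{Re}(B(w,u_1),w)=-\mathrm{Re}(B(u_1,w),w)=0$. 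Since $\nu A$ is positive definite and $N_K=\partial I_K$ is monotone, this leaves
$$\mathrm{Re}(y_1-y_2,\,w) = \nu\|w\|^2 + \mathrm{Re}(\eta_1-\eta_2,\,w) \ge 0,$$
so $\mathcal{A}$ is accretive (hence a fortiori quasi-accretive). The energy bound, which uses part 4 again together with $\mathrm{Re}(\eta,u)\ge 0$ for $\eta\in N_K(u)$ (a consequence of $0\in K$), keeps the solution bounded in $H$ and $V$ exactly as in Theorems \ref{weak}--\ref{strong}.

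The main obstacle is the range condition, namely $R(I+\lambda\mathcal{A})=H$ for small $\lambda>0$, which amounts to solving the stationary inclusion
$$u + \lambda\nu Au + \lambda B(u,u) + \lambda N_K(u) \ni h.$$
I would replace $N_K$ by its Yosida approximation $(N_K)_\varepsilon=\tfrac1\varepsilon(I-(I+\varepsilon N_K)^{-1})$, which is single-valued, monotone and Lipschitz, and solve the regularized equation by the Browder--Minty method: the map $u\mapsto \nu Au+B(u,u)+(N_K)_\varepsilon(u)$ is monotone and coercive from $V$ to $V'$ precisely because the cancellation of the previous paragraph also annihilates the nonlinearity in the duality pairing (part 5 of Theorem \ref{prop b}). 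Elliptic regularity, using $h,f(t)\in H$, lifts the solution to $D(A)$, while the hypothesis $(I+\lambda A)^{-1}K\subset K$ is what forces the approximants into $K$; passing to the limit $\varepsilon\to 0$ with $\varepsilon$-uniform estimates recovers a solution of the stationary inclusion with $u\in D(A)\cap K$. Keeping the approximants in $K$, controlling the quadratic term, and recovering the normal-cone inclusion in the limit are the delicate points of this step.

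Once $\mathcal{A}$ is known to be quasi m-accretive, the generation theorem for such operators (applied as in \cite{BS}) yields, for $u^0\in D(A)\cap K$ and $f\in W^{1,1}([0,T],H)$, a unique strong solution $u\in W^{1,\infty}([0,T],H)$ with $u(t)\in D(A)\cap K$ for a.e.\ $t$, satisfying the right-derivative equation \eqref{e11} with the minimal-norm selection $(\mathcal{A}u-f)^0$; the supplementary bound in $V$ upgrades this to $u\in C([0,T],V)$. Reading \eqref{e11} off then defines the feedback controller through \eqref{e37}, and by construction $g\in L^\infty([0,T],H)$ with $g(t)\in N_K(u(t))$ a.e., which completes the proof.
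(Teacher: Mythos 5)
Your overall architecture (recast the closed-loop system as a multivalued quasi m-accretive evolution inclusion, then invoke the nonlinear semigroup generation theorem) is the same as the paper's, but the step on which your whole proposal rests is wrong: the claimed cancellation $\mathrm{Re}(B(w,u_1),w)=0$ is false, and with it the accretivity of the untruncated operator $\nu A+B+N_K$. You obtain it by invoking item 5(a) of Theorem \ref{prop b} to swap the first two arguments, $b(w,u_1,w)=-b(u_1,w,w)$. But 5(a) cannot be read that way: if $b(u,v,w)=-b(v,u,w)$ held literally, then $b(u,u,w)=0$ for every $w$, i.e.\ $B(u,u)\equiv 0$, contradicting the model. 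What the paper's proof of item 5 actually establishes is the identity $\langle B(u,v),w\rangle=-\langle v,B(u,w)\rangle$, an antisymmetry in the \emph{last two} arguments (which with $v=w$ recovers item 4); it gives no control over $b(w,u_1,w)$. Indeed this cross term does not vanish: take $w$ supported on modes $1,2$ and $u_1$ with only its third component $\zeta\neq 0$; a direct computation from the definition of $B$ gives $b(w,u_1,w)=-i(a+b)k_2\,\zeta\,w_1^*w_2^*$, whose real part equals $(a+b)k_2\neq 0$ for $\zeta=i$, $w_1=w_2=1$ (recall $a+b=-c\neq 0$). This term is exactly the obstruction that makes $\nu A+B$ fail to be monotone — the same obstruction as for Navier--Stokes — and since $K$ is not assumed bounded, there is no fixed shift $\alpha$ making $\nu A+B+N_K+\alpha I$ accretive on $D(A)\cap K$: the best available estimate, $|b(w,u_1,w)|\le C|w|\,|u_1|\,\|w\|$, forces a shift proportional to $|u_1|^2$. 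Your Browder--Minty step collapses for the same reason, since $u\mapsto \nu Au+B(u,u)+(N_K)_\varepsilon(u)$ is not monotone.

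The paper circumvents precisely this difficulty by truncating the nonlinearity, and this device is the missing idea in your proposal. It introduces $B_N(u)=B(u)$ for $\|u\|\le N$ and $B_N(u)=(N/\|u\|)^2B(u)$ otherwise, proves that $\nu A+B_N$ is only \emph{quasi}-accretive with an $N$-dependent shift $\alpha_N$ (via the estimate $|(B_N(u)-B_N(z),u-z)|\le C_N|u-z|^2+\tfrac{\nu}{2}\|u-z\|^2$), and establishes the range condition for $\mu I+\nu A+B_N+N_K$, $\mu>\alpha_N$, through the Yosida approximation $F_\lambda$ of $N_K$ — the hypothesis $(I+\lambda A)^{-1}K\subset K$ being used there to guarantee $(F_\lambda(u_\lambda),Au_\lambda)\ge 0$ and hence uniform $D(A)$ bounds on the approximants. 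The generation theorem then yields a solution $u_N$ of the \emph{truncated} inclusion, and the truncation is removed only at the very end: a priori estimates (using the hypothesis once more to get $(\eta,Au_N)\ge 0$ for $\eta\in N_K(u_N)$, plus Gronwall) give $\|u_N(t)\|\le C$ with $C$ independent of $N$, so choosing $N>C$ forces $B_N(u_N)=B(u_N)$, and $u=u_N$ solves the original closed-loop system \eqref{e36}. Without this truncation/de-truncation scheme, both the accretivity and the range condition in your argument fail.
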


\begin{proof}
Let us define the modified nonlinear map $B_N ( \cdot ):V \longrightarrow V^{'},$ 
\[B_N ( \cdot ) :=
  \begin{cases}
    B(u)      & \quad \text{if }\|u\| \leq N\\
    {\left( \frac{N}{ \|u\|} \right) }^{2}B(u)  & \quad \text{if } \|u\| > N\\
  \end{cases}
\]
First, we will prove that the operator $u \rightarrow (-f+ \nu Au +B_N(u)+N_K(u(t)))$, is quasi m-accretive in $H$. By definition \ref{QM} we have to prove that there exist $\alpha_N > 0$ such that $u \rightarrow (-f+ \nu Au +B_N(u)+N_K(u(t)) + \alpha _N u)$ is m-accretive in $H$. \\
From \cite{Barbu semi}[Chapter II, Theorem 2.1] we know that $u \rightarrow N_K(u)$ is maximal monotone. We prove $\nu A + B_N$ is m-accretive as a first step of the proof. For we define $\nu A + B_N$ as a new operator $\Gamma_N$. \\
Consider the operator $\Gamma _N: D(\Gamma_N) \rightarrow H$ defined by,
\begin{equation*}
\Gamma _N = \nu A + B_N .
\end{equation*}
We claim that $D(\Gamma_N)=D(A)$. Clearly, $ D(\Gamma_N) \subset D(A) $. So it remains to show the other inclusion. \\
For, let $x \in D(A)$. Using 3 of Theorem \ref{prop b} we get,
\begin{align*}
| \Gamma_N(x)| &= |\nu A(x) + B_N(x) | \\
&\leq |\nu A(x)| +  |B_N(x)| \\
&\leq \nu |A(x)| + C_3|x||A(x)| < \infty.
\end{align*} 
Which proves $D(\Gamma_N)=D(A)$. \\
Now, we will show that, $u \rightarrow \Gamma _N(u)$ is m-accretive. By definition \ref{QM}, it is enough to show that $u \rightarrow (\Gamma _N(u) + \alpha_N I)$ is accretive, for some $\alpha_N >0$. \\ 
For $\|u\| \leq N$ , $\|z\| \leq N$, using 2. of theorem \ref{prop b} we get,
\begin{align*}
|(B_N(u) - B_N(z) , u-z)| &= |b(u,u-z,u-z)| + |b(u-z,z,u-z)| \\
 &\leq  C|u-z||z|\|u-z\|.
\end{align*} 
Since $\|u\| \leq N$ , $\|z\| \leq N$ and using Young's inequality we can estimate,
\begin{align}\label{e12}
|(B_N(u) - B_N(z) , u-z)| &\leq C_N|u-z| \|u-z\|   \nonumber \\
&\leq C_N|u-z|^{2} + \frac{\nu}{2} \|u-z\|^{2}.
\end{align}
Similarly, for $\|u\| > N , \|z\| > N$ we get,
\begin{align}
|(B_N(u) - B_N(z) , u-z)| &= |{\left( \frac{N}{\|u\|}\right)}^{2} b(u,u,u-z) - {\left( \frac{N}{\|u\|}\right)}^{2} b(z,z,u-z)| \nonumber \\
&= |{\left( \frac{N}{\|u\|}\right)}^{2} b(u,u,u-z) - {\left( \frac{N}{\|u\|}\right)}^{2} b(z,z,u-z) \nonumber \\
&+ {\left( \frac{N}{\|u\|}\right)}^{2} b(z,z,u-z) - {\left( \frac{N}{\|z\|}\right)}^{2} b(z,z,u-z)| \nonumber \\ 
&\leq {\left( \frac{N}{\|u\|}\right)}^{2} |b(u-z,u,u-z)| + {\left( \frac{N}{\|u\|}\right)}^{2} |b(z,u-z,u-z)|  \nonumber \\
&+ |{\left( \frac{N}{\|u\|}\right)}^{2} - {\left( \frac{N}{\|z\|}\right)}^{2}| |b(z,z,u-z)|.\nonumber
\end{align}
Further using 2. , 5. of Theorem \ref{prop b} and Young's inequality we get,
\begin{align}\label{e13}
|(B_N(u) - B_N(z) , u-z)| &\leq {\left( \frac{N}{\|u\|}\right)}^{2} |b(u-z,u,u-z)|  \nonumber \\
&+ \frac{N^{2}|{\|u\|}^{2} - {\|z\|}^{2}|}{{\|u\|}^{2}{\|z\|}^{2}}|b(z,z,u-z)|  \nonumber \\
&\leq {\left( \frac{N}{\|u\|}\right)}^{2} |b(u-z,u,u-z)| + \frac{N^{2}\|u-z\|}{\|z\|{\|u\|}^{2}}|b(z,u-z,z)|  \nonumber \\
& + \frac{N^{2}\|u-z\|}{\|u\|{\|z\|}^{2}}|b(z,u-z,z)| \nonumber \\
&\leq C{\left( \frac{N}{\|u\|}\right)}^{2} |u-z||u|\|u-z\| +C\frac{N^{2}\|u-z\|}{\|u\|{\|z\|}^{2}}|z||u-z|\|z\| \nonumber \\
& +   C\frac{N^{2}\|u-z\|}{\|z\|{\|u\|}^{2}}|z||u-z|\|z\| \nonumber \\
&\leq C_N|u-z|\|u-z\| + C'_N|u-z|\|u-z\| + C''_N|u-z|\|u-z\| \nonumber \\
&\leq C_N{|u-z|}^{2} + \frac{\nu}{2}{\|u-z\|}^{2}.
\end{align}
If $\|u\| > N, \ \|z\| \leq N$ or $\|u\| \leq N, \ \|z\| > N$ then similar calculations as above yeild estimate as in \eqref{e12},\eqref{e13}. \\ 
Consider $(\Gamma_N + \lambda)$ for $\lambda >0$,
\begin{align*}
\left( (\Gamma_N + \lambda)u - (\Gamma_N + \lambda)z, u-z \right) = \left( (\nu A + B_N + \lambda)u - (\nu A + B_N + \lambda)z, u-z\right) \nonumber \\
= \nu \left(Au-Az,u-z\right) + \left(B_Nu - B_Nz,u-z\right) + \lambda \left( u-z,u-z\right). \nonumber 
\end{align*}
Since $A$  is  m-accretive, $\left( Au-Az,u-z \right) \geq \nu {\|u-z\|}^2$ and using \eqref{e12},\eqref{e13} we get,
\begin{align*}
\left( (\Gamma_N + \lambda)u - (\Gamma_N + \lambda)z, u-z \right)&\geq \nu {\|u-z\|}^{2} - C{|u-z|}^{2} - \frac{\nu}{2} {\|u-z\|}^{2} + \lambda {|u-z|}^{2} \nonumber \\ 
&= \frac{\nu}{2} {\|u-z\|}^{2} + \left( \lambda-C\right) {|u-z|}^{2} \nonumber \\
&\geq 0 \ \ \mbox{if} \ \ \lambda>C.
\end{align*}
Therefore, if we choose $\alpha_N > C > 0$, $u \rightarrow \left( \Gamma _N+ \alpha_N I\right)$ will be accretive.\\
Now it remains to show that, for fixed $\mu > \alpha_N$
\begin{equation*}
R\left( \mu u + \nu Au + B(u) + N_K(u)\right) = H .
\end{equation*}
This would be proved using Hille-Yosida theorem. For, consider the Yosida approximation of $N_K$,
\begin{equation*}
F_\lambda = \frac{1}{\lambda} \left(I-(I+\lambda N_K)^{-1}\right).
\end{equation*}
Let $f\in H$ be arbitary but fixed. Then by Minty's theorem from \cite{Barbu semi} there exists a unique $u_\lambda \in D(A)$ such that
\begin{equation}\label{e14}
 \mu u_\lambda + \nu Au_\lambda + B(u_\lambda) + F_{\lambda}(u_\lambda) = f
\end{equation}
Taking inner product of \eqref{e14} with $u_\lambda$ we get,
\begin{eqnarray}
\mu{|u_\lambda|}^{2} + \nu {\|u_\lambda \|}^{2} + \left(B_N(u_\lambda),u_\lambda \right) + \left(F_{\lambda}(u_\lambda),u_\lambda \right)  = (f,u_\lambda). \nonumber 
\end{eqnarray}
By 2. of Theorem \ref{prop b} and $\left(F_{\lambda}(u_\lambda),u_\lambda \right) \geq 0 $ we get,
$$  \mu{|u_\lambda|}^{2} + \nu {\|u_\lambda \|}^{2} \leq \frac{1}{2\mu}|f|^2 + \frac{\mu}{2}|u_\lambda|^2 .$$
This yeilds,
\begin{eqnarray}\label{e15}
 \frac{\mu}{2}{|u_\lambda|}^{2} + C_{\nu}{\|u_\lambda \|}^{2} \leq C.
\end{eqnarray}
Taking inner product of \eqref{e14} with $Au_\lambda$ we get,
$$\mu {\|u_\lambda \|}^{2} + \nu {|Au_\lambda|}^{2} + \left(B_N(u_\lambda),Au_\lambda \right) + \left(F_\lambda (u_\lambda),Au_\lambda \right) = (f,Au_\lambda). $$
Together with $(I+\lambda A)^{-1} K \subset K \  \forall \ \lambda > 0$ and \cite{Barbu semi}[Chapter IV, Proposition 1.1] we get $\left(F_{\lambda}(u_\lambda), Au_\lambda \right) \geq 0 .$ Therefore, Using 3. of Theorem \ref{prop b},
\begin{eqnarray*}
\mu {\|u_\lambda \|}^{2} + \nu {|Au_\lambda|}^{2} &\leq& |b(u_\lambda,Au_\lambda,u_\lambda)| + |f||Au_\lambda| \\
 &\leq& C|u_\lambda||Au_\lambda|\|u_\lambda\| + |f||Au_\lambda| \\
&\leq& C_{\nu}{|u_\lambda|}^{2} {\|u_\lambda\|}^{2}+ \frac{ \nu}{2}{|Au_\lambda|}^{2} + \frac{1}{\nu}{|f|}^2  
\end{eqnarray*}
Hence \eqref{e15} gives, 
\begin{eqnarray}\label{e16}
C_{\mu} {\|u_\lambda \|}^{2} + \frac{\nu}{2} {|Au_\lambda|}^{2} \leq C. 
\end{eqnarray}
Therefore, we get from \eqref{e15} and \eqref{e16},
\begin{eqnarray}\label{e17}
{\|u_\lambda \|}^{2} + {|Au_\lambda|}^{2} \leq C \ \ \forall \lambda>0.
\end{eqnarray}
Also, for fixed $N$, $B_N$ and $F_{\lambda}$ are bounded linear operators. So we get respectively, 
\begin{eqnarray}\label{e18}
|B_N(u_\lambda)| \leq C_N \mbox{ and } |F_\lambda u_\lambda | \leq C_N. 
\end{eqnarray}
Therefore by \eqref{e17} we can conclude that there exists a subsequence (let's denote by $\lambda$) such that, $u_\lambda \rightarrow u$ strongly in $V$ and $Au_\lambda \rightarrow Au$ weakly in $H$. \\
Moreover, by \eqref{e18} we conclude that $F_\lambda u_\lambda \rightarrow \gamma$ weakly in $H$ and $B_N(u_\lambda) \rightarrow B_N(u)$ weakly in $H$. \\
Since, $F_{\lambda}$ are the Yosida approximation of $N_K$, so we get $\gamma \in N_K(u)$ and $u$ is the solution of
\begin{equation*}
 \mu u + \nu Au + B_N(u) + N_K(u) = f.
\end{equation*}
Therefore, $u \rightarrow  \mu I + \nu Au + B_N(u) + N_K(u)$ is quasi m-accretive. Also $u^0 \in D(A).$ So by \cite{barbu mono}[Chapter 4, Theorem 4.5, Theorem 4.6] we get, 
\begin{equation*}
\frac{du}{dt} +\nu Au + B_N(u) + N_K(u) = f, \ \ u(0)=u^0 \ \ a.e.  \ t\in [0,T],
\end{equation*}
has a unique solution $u_N \in W^{1, \infty}([0,T],H) \cap L^{\infty}([0,T],D(A) \cap K) \cap C([0,T],V)$ which satisfies,
\begin{equation*}
\frac{d^{+}u_N}{dt} + {\left(\nu Au_N + B_N(u_N) + N_K(u_N) -f \right)}^{0} = 0 \ \  \forall t \in [0,T].
\end{equation*}
Now, we will show that $\|u_N\| < C$ for some large N. For, taking inner product of  
\begin{equation}\label{e39}
\frac{du_N}{dt} +\nu Au_N + B_N(u_N) + N_K(u_N) = f 
\end{equation}
with $u_N$ and using the fact  that $\left(N_K(u_N),u_N\right)\geq 0 $  we get,
\begin{align}\label{e38}
&\frac{d}{dt} {|u_N(t)|}^{2} + \nu {\|u_N\|}^{2} + \left(N_K(u_N),u_N\right) \leq  + |(f,u_N)| \nonumber \\
&\frac{d}{dt} {|u_N(t)|}^{2} + \nu {\|u_N\|}^{2} \leq  \frac{1}{2}{|f|}^{2} + \frac{1}{2} {|u_N|}^{2}  
\end{align}
Similarly taking inner product of \eqref{e39} with $Au_N$ and  we get,
$$\frac{d}{dt}\|{u_N(t)}\|^{2} + \nu {|Au_N|}^{2}  + \left(N_K(u_N),Au_N\right) \leq  -\left(B_N(u_N),Au_N\right) + \left(f,Au_N\right) $$
Using  $(I+\lambda A)^{-1} K \subset K \  \forall \ \lambda > 0$ and \cite{Barbu semi}[Chapter IV, Proposition 1.1] we get \\ $\left(\eta,Au_N\right) \geq 0 \ \  \forall \eta \in N_K(u_N)$. Therefore,
$$\frac{d}{dt}\|{u_N(t)}\|^{2} + \nu {|Au_N|}^{2} \leq - \left(B_N(u_N),Au_N\right) + \left(f,Au_N\right) .$$

Using Theorem \ref{prop b} we estimate $\left(B_N(u_N),Au_N\right)$ as following
\begin{align*}
-\left(B_N(u_N),Au_N\right) &\leq |\left(B_N(u_N),Au_N\right)|  \\
&\leq |B_N(u_N)| |Au_N|  \\
&\leq C|u_N| \|u_N\| |Au_N|  \\
&\leq C_{\nu}|u_N|^2 \|u_N\|^2 + \frac{\nu}{4}|Au_N|^2.
\end{align*}
Further we get,
\begin{eqnarray}\label{e40}
\frac{d}{dt}\|{u_N(t)}\|^{2} + \frac{\nu}{2} {|Au_N|}^{2} \leq C_{\nu}|u_N|^2 \|u_N\|^2 +  \frac{1}{\nu}|f|^2  .
\end{eqnarray}
Using Grownwall's lemma to \eqref{e38} we get,
\begin{equation*}
|u_N(t)|^{2}   \leq \exp{\frac{t}{2}}(|u^0|^{2}  + \frac{1}{2}\int_0^t {|f(s)|}^2 ds)  \ \ \forall  t\in [0,T].
\end{equation*}
Integrating  over $[0,t]$ and using $f\in L^{2}([0,T],H)$ we get,
\begin{equation*}
|u_N(t)|^{2} + \nu \int_0^t {\|u_N(s) \|}^{2}ds  \leq C  \ \ \forall  t\in [0,T].
\end{equation*}
Using Grownwall's lemma to \eqref{e40} we get,
\begin{eqnarray*}
\|{u_N(t)}\|^{2} \leq \left( {\|u^0\|}^{2} + \frac{1}{\nu}\int_0^t {|f(s)|}^{2} ds \right) \exp{ \left( C_{\nu} \int_0^t {|u_N(s)|}^2 ds\right)} \ \ \forall t\in [0,T]. 
\end{eqnarray*}
Integrating over $[0,t]$ and using $f\in L^{2}([0,T],H)$ and $u_N \in L^{\infty}([0,T],H)$ leads to,
\begin{eqnarray*}
\|{u_N(t)}\|^{2} + \frac{\nu}{2} \int_0^t {|Au_N(s)|}^{2}  \leq C
\end{eqnarray*}
So we have,
\begin{eqnarray*}
\|{u_N(t)}\|^{2} \leq C \ \ \forall t \in [0,T]. 
\end{eqnarray*}
So, for $N$ large enough, (such that $N > C$) ; $\|{u_N(t)}\| \leq C$ on $t\in [0,T]$. Hence, we will get $B_N(u) = B(u)$ for all $t \in (0,T)$ and $u_N = u$ is the solution to \eqref{e36}. \\
This proves the theorem.
\end{proof}

In theorem \ref{thm 1} we require that $(I+\lambda A)^{-1} K \subset K \  \forall \ \lambda > 0$. This ia a very strong assumption and may not be satisfied in practical problems. We want to relax this condition. If we do not assume that $K$ is invariant under $(I+\lambda A)^{-1}$, we still get a result but of weaker form. We assume $K \subset V$ and show that the feedback controller $g$ lies in $L^2([0,T],V')$, can be found which will ensure that trajectory does not leave $K$.

\begin{thm}\label{thm 2}
 Let, $K$ be a closed convex subset of $V$ such that $0 \in K.$ Let $u^0 \in D(A) \cap K$ and $f \in W^{1,2}([0,T],H)$. Then there exists a feedback controller $g \in L^2([0,T],H)$ and $g(t) \in -N_K^*(u(t))$ a.e. $t \in [0,T)$, such that the corresponding solution  to the closed loop system \eqref{e36} i.e.
\begin{equation*}
\frac{du}{dt}+\nu Au+B(u)=f+g , \ \ u(0)=u^0
\end{equation*}  
satisfies $u \in W^{1,\infty}([0,T],H) \cap W^{1,2}([0,T],V)$. Moreover, 
\begin{equation}\label{e21}
\frac{d^+ u(t)}{dt} + (-f(t)+ \nu Au(t) + B(u(t))) + N_K^*(u(t)))^0 = 0 , \ \ u(0)=u^0
\end{equation}
and $u(t) \in K \ \  \forall t\in [0,T]$. \\
For each $t$, $N_K^*(u(t))$ is defined by $N_K^*(u(t)) := \{w\in V' ; (w,u-z) \geq 0 , \ \forall z \in K\}$, is the $V'$ valued normal cone to $K$ at $u(t)$. Similar to the theorem \ref{thm 1} $u \rightarrow (-f+ \nu Au(t) +B(u(t))+N_K^*(u(t)))$ is a multivalued map and we define for each $u(t)$, $ (-f(t)+ \nu Au(t) + B(u(t))) + N_K^*(u(t)))^0$ as the projection of origin onto the closed convex set $(-f+ \nu Au(t) +B(u(t))+N_K^*(u(t)))$. \\
By \eqref{e21} we get the feedback controller $g$ as,
 \begin{align}
 g(t)= -f(t) &+ \nu Au(t) + B(u(t)) + N_K^*(u(t)) \nonumber \\
& - (-f(t)+ \nu Au(t) + B(u(t)) + N_K^*(u(t)))^0 \ \ \ \forall t \in [0,T).
 \end{align}
\end{thm}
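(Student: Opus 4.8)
The plan is to follow the architecture of the proof of Theorem \ref{thm 1} — truncate the nonlinearity, regularise the constraint, solve the regularised problem, and pass to two successive limits — but to reroute the one step where the resolvent-invariance hypothesis was essential. In Theorem \ref{thm 1} the condition $(I+\lambda A)^{-1}K\subset K$ was used only to produce the sign conditions $(F_\lambda(u_\lambda),Au_\lambda)\ge0$ and $(\eta,Au_N)\ge0$ for $\eta\in N_K(u_N)$, i.e. to control the constraint term when testing the equation against $Au$; this is precisely what delivered the $D(A)$-bound and hence a strong solution. With that hypothesis dropped I would abandon testing against $Au$ and instead test against $\frac{du}{dt}$, exploiting that the constraint is now a subdifferential $N_K^*=\partial I_K$. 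This only buys a $V$-level bound, which is exactly why the conclusion weakens to $W^{1,2}([0,T],V)$ and why the operative object becomes the $V'$-valued cone $N_K^*$, the resulting feedback $g=-\eta$ with $\eta\in N_K^*(u)$ lying in $L^2([0,T],V')$ as announced in the introduction.

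Concretely, I would retain the truncated operator $B_N$ of Theorem \ref{thm 1}, for which $|B_N(u)|\le C_1|u|\,\|u\|$ always holds and $|B_N(u)|\le C(N)$ for fixed $N$, and introduce the Yosida approximation $\beta_\lambda=\partial\varphi_\lambda$ of $\partial I_K$, where $\varphi_\lambda$ is the Moreau envelope of $I_K$. For fixed $N$ and $\lambda$ the approximate equation $\frac{du_\lambda}{dt}+\nu Au_\lambda+B_N(u_\lambda)+\beta_\lambda(u_\lambda)=f$, $u_\lambda(0)=u^0$, is solvable: $\beta_\lambda$ is a globally Lipschitz monotone map, $\nu A+\beta_\lambda$ is of subdifferential type, and $B_N$ is a bounded Lipschitz perturbation, so the nonlinear semigroup theory used in Theorem \ref{thm 1} (see \cite{barbu mono}) applies and, since $f\in W^{1,2}([0,T],H)$, yields a solution in $W^{1,\infty}([0,T],H)\cap W^{1,2}([0,T],V)$.

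The heart of the matter is a family of a priori bounds uniform in $\lambda$ and $N$. Testing with $u_\lambda$ and using $0\in K$, so that $(\beta_\lambda(u_\lambda),u_\lambda)\ge\varphi_\lambda(u_\lambda)\ge0$ (the approximate form of $(N_K^*(u),u)\ge0$), gives the $L^\infty([0,T],H)\cap L^2([0,T],V)$ bound. Testing next with $\frac{du_\lambda}{dt}$ and using that $(\beta_\lambda(u_\lambda),\frac{du_\lambda}{dt})=\frac{d}{dt}\varphi_\lambda(u_\lambda)$ integrates to $\varphi_\lambda(u_\lambda(t))-\varphi_\lambda(u^0)\ge0$, so that the constraint term may be discarded, closes a bound on $\frac{du_\lambda}{dt}$ in $L^2([0,T],H)$ and on $u_\lambda$ in $L^\infty([0,T],V)$; the nonlinear contribution is controlled by $(B_N(u_\lambda),\frac{du_\lambda}{dt})\le C_1|u_\lambda|\,\|u_\lambda\|\,|\frac{du_\lambda}{dt}|$, Young's inequality and Gronwall's lemma fed by the $L^\infty(H)$ bound. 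This is the analogue of the final $\|u_N\|\le C$ estimate of Theorem \ref{thm 1}, obtained here through $\frac{du}{dt}$ rather than $Au$, and it is uniform in $N$.

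Finally I would pass to the limit. For fixed $N$, as $\lambda\to0$ I extract $u_\lambda\to u_N$ strongly in $L^2([0,T],V)$ and weakly in the stronger norms; rewriting the equation as $\beta_\lambda(u_\lambda)=f-\frac{du_\lambda}{dt}-\nu Au_\lambda-B_N(u_\lambda)$ shows that $\beta_\lambda(u_\lambda)$ is bounded only in $V'$, since $\nu Au_\lambda$ is controlled merely in $L^2([0,T],V')$, whence $\beta_\lambda(u_\lambda)\rightharpoonup\eta$ in $L^2([0,T],V')$ and the demiclosedness of the maximal monotone graph $\partial I_K$ identifies $\eta(t)\in N_K^*(u_N(t))$ with $u_N(t)\in K$; this produces a solution of the truncated closed-loop inclusion. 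The uniform-in-$N$ bound $\|u_N(t)\|\le C$ then forces the truncation to be inactive for $N$ large, so $B_N(u_N)=B(u_N,u_N)$ and $u_N=u$ solves \eqref{e36}, after which $g$ is read off from \eqref{e21}. The hardest step is this combined $\frac{du}{dt}$-estimate and limit passage: one must test against $\frac{du}{dt}$ although the constraint lives only in $V'$, keep the nonlinear term uniformly controlled in $N$, and reconcile the merely weak $V'$-convergence of $\beta_\lambda(u_\lambda)$ — which is what forces the $V'$-valued cone and the weaker conclusion — with the identification of the limit inside the maximal monotone graph, all without the $D(A)$-bound the discarded hypothesis previously supplied.
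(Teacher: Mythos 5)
Your plan is a genuinely different route from the paper's, and your diagnosis of where the hypothesis $(I+\lambda A)^{-1}K\subset K$ entered Theorem \ref{thm 1} (only to give signs against $Au$) is accurate. The paper, however, never regularises the evolution problem: it proves that $u\mapsto \nu Au+B_N(u)+N_K^*(u)$ is quasi m-accretive (maximal monotonicity of $N_K^*$ in $V\times V'$, accretivity as in Theorem \ref{thm 1}, then coercivity plus a restriction theorem of Brezis to get maximal monotonicity in $H\times H$), invokes Barbu's nonlinear semigroup theorems to obtain at one stroke existence, the $W^{1,\infty}([0,T],H)$ regularity and the minimal-section equation \eqref{e21}, and finally removes the truncation via a uniform bound obtained by \emph{differentiating the equation in time} and testing with $u_N'$ (the cone term is discarded by monotonicity of $N_K^*$, the truncated nonlinearity is estimated case by case), recovering $\|u_N\|\le C$ from the equation afterwards. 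Your substitute — Yosida-regularise the constraint inside the evolution equation, test with $u_\lambda$ and with $u_\lambda'$, pass to the limit by compactness — is a legitimate alternative architecture, but as written it has two genuine gaps.

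First, your regularisation lives in the wrong duality. The theorem only assumes $K$ closed and convex in $V$; such a $K$ need not be closed in $H$ (take $K=V$ itself), so $I_K$ need not be lower semicontinuous on $H$, the $H$-projection $P_K$ and the $H$-Moreau envelope $\varphi_\lambda=\frac{1}{2\lambda}d_K^2$ need not exist, and $\partial I_K$ need not be maximal monotone in $H\times H$. This is precisely why the statement is formulated with the $V'$-valued cone $N_K^*$ and why the paper routes through maximal monotonicity in $V\times V'$ plus the Brezis restriction result. Rebuilding your $\beta_\lambda$ in the $V$--$V'$ duality is possible in principle, but then your key step $(\beta_\lambda(u_\lambda),\tfrac{du_\lambda}{dt})=\tfrac{d}{dt}\varphi_\lambda(u_\lambda)$ becomes a $V'$--$V$ pairing that requires $u_\lambda'(t)\in V$, which you do not have a priori at that stage. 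Second, your uniform estimates deliver only $u'\in L^2([0,T],H)$ and $u\in L^\infty([0,T],V)$ — enough to deactivate the truncation, but strictly weaker than the asserted conclusion $u\in W^{1,\infty}([0,T],H)\cap W^{1,2}([0,T],V)$ — and your compactness construction never establishes \eqref{e21} at all. The feedback law in the theorem is \emph{defined} through the minimal section in \eqref{e21}, so ``reading off $g$ from \eqref{e21}'' presupposes exactly the semigroup-theoretic characterisation that your route bypasses; a limit of the regularised equations gives you \emph{some} selection $\eta(t)\in N_K^*(u(t))$ with $g=-\eta$, not the right-derivative/minimal-section dynamics. Both gaps are repairable, but essentially by reinstating the paper's machinery: prove quasi m-accretivity of the limit operator (your stationary analogue of the $\lambda\to 0$ limit would serve as the range condition), apply the Barbu existence theorems to get \eqref{e21} and the $W^{1,\infty}(H)$ bound, and run the time-differentiated estimate — which is in fact cleaner at your regularised level, since monotonicity of the single-valued $\beta_\lambda$ gives $((\beta_\lambda(u_\lambda))',u_\lambda')\ge 0$ rigorously, whereas the paper differentiates the multivalued cone term only formally.
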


\begin{proof}
We have to show that $u \rightarrow (-f+ \nu Au +B_N(u)+N_K^*(u))$ is quasi m-accretive. By \cite{Barbu semi}[Chapter II, Theorem 2.1] we know $u \rightarrow N_K^*(u)$ is maximal monotone in $V \times V^*$. Hence $u \rightarrow (-f+ \nu Au +B_N(u)+N_K^*(u) + \alpha_N u)$ is accretive for some $\alpha_N > 0$. We can prove it as in the proof of Theorem \ref{thm 1}.  Therefore this map is m-accretive by definition \ref{QM}. Also we can show that $u \rightarrow (-f+ \nu Au +B_N(u)+N_K^*(u) + \alpha_N I)$ is coercive. So by \cite{brezis}[Chapter II, Example 2.3.7] its restriction to $H$ is maximal monotone in $H \times H$. This gives, $u \rightarrow (-f+ \nu Au +B_N(u)+N_K^*(u))$ is quasi m-accretive by definition \ref{QM}. So by \cite{Barbu ns}[Chapter 1, Theorem 1.15, Theorem 1.16] we get,
\begin{equation*}
\frac{du(t)}{dt} + \nu Au(t) + B_N(u(t)) + N_K^*(u(t)) = f(t), \ \ u(0)=u^0 \ a.e. \ t \in [0,T]
\end{equation*}
has a unique solution $u_N \in W^{1,\infty}([0,T],H) \cap L^{2}([0,T],V)$. Moreover $u_N$ satisfies
\begin{equation*}
\frac{d^{+}u_N(t)}{dt} + {\left(\nu Au_N(t) + B_N(u_N(t)) + N_K^*(u_N(t)) -f(t) \right)}^{0} = 0, \ \  \forall t \in [0,T].
\end{equation*}
Taking inner product of 
\begin{equation}\label{22}
\frac{du_N(t)}{dt} + \nu Au_N(t) + B_N(u_N(t)) + N_K^*(u_N(t))=f(t) , \ \ u_N(0)=u^0.
\end{equation}
 with $u_N$ we get,
\begin{align*}
& \frac{d}{dt}|u_N(t)|^2 +\nu \|u_N(t)\|^2 + (B_N(u_N(t)),u_N(t))+ (N_K^*(u_N(t)),u_N(t)) = (f,u_N(t)) \\
& \frac{d}{dt}|u_N(t)|^2 +\nu \|u_N(t)\|^2 \leq  (f(t),u_N(t))   \\
& \frac{d}{dt}|u_N(t)|^2 + \nu \|u_N(t)\|^2 \leq  \frac{1}{2}|f(t)|^2 + \frac{1}{2} |u_N(t)|^2 . 
\end{align*}
Using Grownwall's lemma and $f \in W^{1,2}([0,T],H) $ and by integrating we get,
\begin{equation}\label{23}
|u_N(t)|^2 + \nu \int_0^t \|u_N\|^2 \leq C.
\end{equation}
Now, we will differentiate \eqref{22} with respect to $t$ to get,
\begin{equation}\label{24}
\frac{du_N'(t)}{dt} + \nu(Au_N(t))' + (B_N(u_N(t)))' + (N_K^*(u_N(t)))'= f'(t).
\end{equation}
Taking inner product of \eqref{24} with $u_N'$ we get,
$$(\frac{du_N'(t)}{dt},u_N'(t)) + \nu ((A(u_N(t)))', u_N'(t)) + ((B_N(u_N(t)))',u_N'(t))  $$
$$+ ((N_K^*(u_N(t)))',u_N'(t)) = (f'(t),u_N'(t)).$$
Since, $A$ is a linear operator $(Au_N(t))' = A(u_N'(t))$. So we get,
$$\frac{1}{2} \frac{d}{dt}|u_N'(t)|^2 + \nu \|u_N'(t)\|^2 + ((B_N(u_N(t)))',u_N'(t)) = (f'(t),u_N'(t))$$
$$ - ((N_K^*(u_N(t)))',u_N'(t)).$$
Now we will estimate $((B_N(u_N(t)))',u_N'(t))$ and $((N_K^*(u_N(t)))',u_N'(t))$. 
As $h \rightarrow 0$ we get,
$$((N_K^*(u_N(t)))',u_N'(t)) = \left( \frac{1}{h}(N_K^*(u_N(t+h)) - N_K^*(u_N(t))), \frac{1}{h} (u_N(t+h)-u_N(t)) \right)$$
which can be written as
\begin{eqnarray*}
((N_K^*(u_N(t)))',u_N'(t)) = \frac{1}{h^2} \left( N_K^*(u_N(t+h)) , u_N(t+h)-u_N(t) \right) \\
+ \frac{1}{h^2} \left( N_K^*(u_N(t)), u_N(t) - u_N(t+h) \right)
\end{eqnarray*}
Since, both the term in the right hand side are non-negative  we get, \\ 
$((N_K^*(u_N(t)))',u_N'(t)) \geq 0$.
\begin{eqnarray}\label{28}
\frac{1}{2} \frac{d}{dt}|u_N'(t)|^2 + \nu \|u_N'(t)\|^2 + ((B_N(u_N(t)))',u_N'(t)) \leq (f'(t),u_N'(t)). 
\end{eqnarray}
For $\|u_N\| \leq N$ we set,
\begin{align*}
((B_N(u_N(t)))',u_N'(t)) &= \frac{1}{h^2}(B_N(u_N(t+h))- B_N(t), u_N(t+h)-u_N(t)) \\
&= \frac{1}{h^2} [b(u_N(t+h),u_N(t+h)-u_N(t),u_N(t+h)-u_N(t) \\ 
&- b(u_N(t+h)-u_N(t),u_N(t),u_N(t+h)-u_N(t)))]. 
\end{align*}
As the limit $h \rightarrow 0$ we get,
\begin{eqnarray*}
((B_N(u_N(t)))',u_N'(t)) = b(u_N(t),u_N'(t),u_N'(t)) + b(u_N'(t),u_N(t),u_N'(t)).
\end{eqnarray*}
Using \ref{prop b} we get,
\begin{align*}
|((B_N(u_N(t)))',u_N'(t))| &\leq  |b(u_N'(t),u_N(t),u_N'(t)| \nonumber \\
&\leq C|u_N||u_N'|\|u_N'\|  \nonumber \\
&\leq C_\nu|u_N|^2|u_N'|^2 + \frac{\nu}{2}\|u_N'\|^2 . 
\end{align*}
For $\|u_N\| > N$ we get,
\begin{align*}
&((B_N(u_N(t)))',u_N'(t)) = \frac{1}{h^2}(B_N(u_N(t+h))- B_N(u_N(t)), u_N(t+h)-u_N(t)) \\
&= \frac{1}{h^2}\left(\frac{N^2}{\|u(t+h)\|^2} B(u_N(t+h))- \frac{N^2}{\|u(t)\|^2} B(u_N(t)), u_N(t+h)-u_N(t)\right) \\
&= \frac{1}{h^2}\left(\frac{N^2}{\|u(t+h)\|^2} B(u_N(t+h))- \frac{N^2}{\|u(t)\|^2} B(u_N(t+h)), u_N(t+h)-u_N(t)\right) \\
&+ \frac{1}{h^2}\left(\frac{N^2}{\|u(t)\|^2} B(u_N(t+h))- \frac{N^2}{\|u(t)\|^2} B(u_N(t)), u_N(t+h)-u_N(t)\right) \\
&= \left(B(u_N(t+h)) \frac{N^2}{h} \left( \frac{1}{\|u(t+h)\|^2} - \frac{1}{\|u(t)\|^2} \right) , \frac{u_N(t+h)-u_N(t)}{h} \right) \\
&+ \left(\frac{N^2}{\|u(t)\|^2} \frac{B(u_N(t+h))- B(u_N(t))}{h}, \frac{u_N(t+h)-u_N(t)}{h}\right). \\
\end{align*}
Now, as $h \rightarrow 0$ we get,
\begin{align*}
((B_N(u_N(t)))',u_N'(t)) &= \left(B(u_N(t)) \frac{2N^2\|u_N'(t)\|}{\|u_N(t)\|^3} , u_N'(t) \right) \\
&+ \frac{N^2}{\|u(t)\|^2} \left( B(u_N'(t)),u_N(t) \right) \\
= \frac{2N^2\|u_N'(t)\|}{\|u_N(t)\|^3} &b(u_N(t),u_N(t),u_N'(t)) + \frac{N^2}{\|u(t)\|^2} b(u_N'(t),u_N'(t),u_N(t)). 
\end{align*}
Since, $\|u_N(t)\| \neq 0$ we get,
\begin{align*}
|((B_N(u_N(t)))',u_N'(t))| \leq \frac{2N^2\|u_N'(t)\|}{\|u_N(t)\|^3}&|b(u_N(t),u_N(t),u_N'(t))| \\
&+ \frac{N^2}{\|u(t)\|^2}|b(u_N'(t),u_N(t),u_N'(t))| \\
\leq \frac{2\|u_N'(t)\|}{\|u_N(t)\|}|b(u_N(t),u_N'(t),u_N(t))| &+ |b(u_N'(t),u_N(t),u_N'(t))| \\
\leq C\frac{2\|u_N'(t)\|}{\|u_N(t)\|}|u_N(t)||u_N'(t)|\|u_N(t)\| &+ C|u_N'(t)||u_N(t)|\|u_N'(t)\| \\
\leq 2C\|u_N'(t)\||u_N(t)||u_N'(t)| &+ C|u_N'(t)||u_N(t)|\|u_N'(t)\| . 
\end{align*}
Using Young's inequality we get,
\begin{eqnarray*}
|((B_N(u_N(t)))',u_N'(t))| &\leq \frac{\nu}{2} \|u_N'(t)\|^2 + C_\nu|u_N(t)|^2|u_N'(t)|^2   
\end{eqnarray*}
So, from \eqref{28}, using above estimates we get,
\begin{align*}
\frac{d}{dt}|u_N'(t)|^2 + \frac{\nu}{2} \|u_N'(t)\|^2  \leq C_\nu|u_N(t)|^2|u_N'(t)|^2 + \frac{1}{2}|f'|^2 + \frac{1}{2}|u_N'(t)|^2 \\
\frac{d}{dt}|u_N'(t)|^2 + \frac{\nu}{2} \|u_N'(t)\|^2  \leq \left(\frac{1}{2}+C_\nu|u_N(t)|^2\right)|u_N'(t)|^2 + \frac{1}{2}|f'|^2.
\end{align*}
Therefore using Gronwall's inequality we get,
$$|u_N'(t)|^2  \leq \exp \bigg\{\int_0^t \left(\frac{1}{2}+C_\nu|u_N(s)|^2\right)ds\bigg\} \left(\frac{1}{2}\int_0^t |f'(s)|^2\right). $$
Then integrating  and using $u_N\in  W^{1,\infty}([0,T],H)$, $f \in W^{1,2}([0,T],H)$ and the above estimate we get,
$$|u_N'|^2 + \frac{\nu}{2} \int_0^t \|u_N'(s)\|^2 ds \leq C . $$
So the following inequality is true $\forall t\in [0,T]$ and $N=1,2,\cdot \cdot$, 
\begin{eqnarray}\label{e41}
|u_N'(t)|^2 + \int_0^T \|u_N'(s)\|^2 ds \leq C .
\end{eqnarray}
Now, taking inner product of \eqref{22} with $u_N$ we get,
$$\left( \frac{du_N}{dt}, u_N\right) + \nu \left(Au_N,u_N\right) + \left(B_N(u_N),u_N\right) + \left(N_K^*(u_N),u_N\right) = \left(f,u_N \right) .$$
which implies,
$$\nu \|u_N\|^2 \leq -\left( \frac{du_N}{dt}, u_N\right) + \left(f,u_N \right) .$$
Using similar estimates used to prove \eqref{e38} we can get,
$$\nu \|u_N\|^2  \leq  \frac{1}{2}|u_N'|^2 + |u_N|^2 + \frac{1}{2}|f|^2 .$$
Now using \eqref{e41} and $u_N\in W^{1,\infty}([0,T],H)$ and $f \in W^{1,2}([0,T],H)$ we get,
$$ \|u_N\| \leq  C.$$
So, for $N$ large enough, (such that $N > C$), $\|{u_N(t)}\| \leq C$ for all $t\in [0,T]$. Hence, we will get $B_N(u) = B(u) \ \forall \ t \in [0,T]$ and $u_N = u$ is the solution to \eqref{e17}. This completes the proof.
\end{proof}

 The next theorem is most general one where we wish to find a  control that  has support only in the small subset of the state space. In the case of sabra shell model it would be most interesting if we put the condition that the control be of finite dimension. For this model,  $H$ being the $l^2$ space, this condition would mean that we are projecting $u$ on a finite subset of $\mathbb{N}$ which in turn would mean that we are considering only finitely many modes of $u$. In the next theorem, we show that we can find controller such that corresponding solution to the closed loop system remains close to $K$, however it may not be in the $K$ for all $t$. The controls are written in terms of the projection operator on the set $K$.

\begin{thm}\label{thm 3}
Let, $K$ be a closed convex subset of $H$ and $0 \in K$. Let $P_K:H \rightarrow K$ be the projection operator on $K$ and $m$ is the characteristic function of a finite set $\omega \subset \mathbb{N}$ such that $P_K(mu)=mP_K(u) \ \ \forall u\in H$. Also assume $u^0 \in D(A)$ such that $mu^0 \in K$ and $f \in W^{1,2}([0,T],H)$. Then, for each $\lambda > 0$ there exists a feedback controller $g_\lambda = -\frac{1}{\lambda} \left( mu_\lambda - mP_K(u_\lambda) \right)$ such that the solution $u_\lambda$ of the closed loop system 
\begin{equation}\label{e25}
\frac{du(t)}{dt} + \nu Au(t) + B_N(u(t)) = f(t) + g(t), \ \ u(0)=u^0 
\end{equation}
satisfies, $u_\lambda \in W^{1,\infty}([0,T],H) \cap L^\infty([0,T],D(A))$. Moreover there exist a $C>0$ such that
\begin{eqnarray}\label{e26}
\frac{1}{\lambda} \int_0^T d_K^2(mu_\lambda(t))dt \leq C \ \ \forall \lambda > 0.
\end{eqnarray}
\end{thm}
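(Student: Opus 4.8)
The plan is to substitute the proposed feedback law into \eqref{e25} and read the resulting closed loop equation as a penalized problem whose penalty operator is monotone and Lipschitz, and then to run energy estimates parallel to those of Theorems \ref{thm 1} and \ref{thm 2}. Write $\beta_\lambda(u):=\frac{1}{\lambda}\left(mu-mP_K(u)\right)$, so that with $g=g_\lambda$ the system \eqref{e25} becomes $\frac{du_\lambda}{dt}+\nu Au_\lambda+B_N(u_\lambda)+\beta_\lambda(u_\lambda)=f$, $u_\lambda(0)=u^0$. The commutation hypothesis $P_K(mu)=mP_K(u)$ lets me rewrite $\beta_\lambda(u)=\frac{1}{\lambda}\left(mu-P_K(mu)\right)=\frac{1}{\lambda}(I-P_K)(mu)$. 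First I would record the three properties of $\beta_\lambda$ that drive everything: (i) since $0\in K$ we have $P_K(0)=0$, so $\beta_\lambda(0)=0$; (ii) $\beta_\lambda$ is monotone and Lipschitz on $H$, because $I-P_K$ is monotone (firmly nonexpansive), $m$ is a self-adjoint idempotent commuting with $A$, and $(I-P_K)(mu)=m(u-P_K u)$ lies in the range of $m$; and, most importantly, (iii) $\left(\beta_\lambda(u),u\right)=\frac{1}{\lambda}\left((I-P_K)(mu),mu\right)\geq \frac{1}{\lambda}\,d_K^2(mu)$, which follows from the projection inequality $(w-P_K w,P_K w)\geq 0$ applied with the point $0\in K$. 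For existence of $u_\lambda$ at each fixed $\lambda$, $\beta_\lambda$ is a Lipschitz monotone perturbation of $\nu A+B_N$, which was shown to be quasi m-accretive in the proof of Theorem \ref{thm 1}; such a perturbation is again quasi m-accretive in the sense of Definition \ref{QM}, and the cited results of Barbu then give a unique solution $u_\lambda$ together with the right-derivative form of the equation.

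The uniform estimate \eqref{e26} should fall straight out of the basic energy identity. Taking the $H$-inner product of the closed loop equation with $u_\lambda$ and real parts, the nonlinear term drops by property 4 of Theorem \ref{prop b} ($Re(B_N(u_\lambda),u_\lambda)=0$), leaving $\frac{1}{2}\frac{d}{dt}|u_\lambda|^2+\nu\|u_\lambda\|^2+(\beta_\lambda(u_\lambda),u_\lambda)=Re(f,u_\lambda)$. Bounding the right side by $\frac{1}{2}|f|^2+\frac{1}{2}|u_\lambda|^2$, discarding the nonnegative penalty term, and applying Gronwall's lemma (with $f\in W^{1,2}\subset C([0,T],H)$) gives $|u_\lambda(t)|^2+\nu\int_0^T\|u_\lambda\|^2\leq C$ uniformly in $\lambda$. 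Integrating the identity once more and this time keeping the penalty term, property (iii) yields $\frac{2}{\lambda}\int_0^T d_K^2(mu_\lambda)\,dt\leq |u^0|^2+\int_0^T\left(|f|^2+|u_\lambda|^2\right)\leq C$, which is exactly \eqref{e26}.

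For the regularity $u_\lambda\in W^{1,\infty}([0,T],H)\cap L^\infty([0,T],D(A))$ I would differentiate the closed loop equation in $t$ and test with $u_\lambda'$, exactly as in the proof of Theorem \ref{thm 2}. The crucial point is that the penalty contributes $\left((\beta_\lambda(u_\lambda))',u_\lambda'\right)\geq 0$ by monotonicity of $\beta_\lambda$ (it is the limit of $h^{-2}\left(\beta_\lambda(u_\lambda(t+h))-\beta_\lambda(u_\lambda(t)),u_\lambda(t+h)-u_\lambda(t)\right)$), so it may be discarded, while $((B_N(u_\lambda))',u_\lambda')$ is handled by the same two-case computation ($\|u_\lambda\|\leq N$ and $\|u_\lambda\|>N$) as in Theorem \ref{thm 2}. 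The initial datum $u_\lambda'(0)=f(0)-\nu Au^0-B_N(u^0)-\beta_\lambda(u^0)$ is bounded because $mu^0\in K$ forces $\beta_\lambda(u^0)=0$; Gronwall then gives $u_\lambda\in W^{1,\infty}([0,T],H)$ with $\int_0^T\|u_\lambda'\|^2\leq C$, and testing with $u_\lambda$ as at the end of Theorem \ref{thm 2} produces $\|u_\lambda(t)\|\leq C$. Finally, solving $\nu Au_\lambda=f-u_\lambda'-B_N(u_\lambda)-\beta_\lambda(u_\lambda)$ and estimating $|B_N(u_\lambda)|\leq C_1|u_\lambda|\|u_\lambda\|$ (property 1 of Theorem \ref{prop b}) together with $|\beta_\lambda(u_\lambda)|=\frac{1}{\lambda}d_K(mu_\lambda)\leq \frac{C}{\lambda}$ gives $|Au_\lambda(t)|\leq C(\lambda)$, hence $u_\lambda\in L^\infty([0,T],D(A))$ for each fixed $\lambda$.

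The main obstacle is step (iii) and the sign bookkeeping of the penalty in the energy estimates. Unlike Theorems \ref{thm 1} and \ref{thm 2}, here there is no invariance hypothesis $(I+\lambda A)^{-1}K\subset K$ to exploit, so the term $(\beta_\lambda(u_\lambda),Au_\lambda)$ carries no usable sign; this is precisely why the regularity argument must pass through time-differentiation, where monotonicity alone delivers $((\beta_\lambda(u_\lambda))',u_\lambda')\geq 0$, rather than through testing with $Au_\lambda$. Making the single commutation hypothesis $P_K(mu)=mP_K(u)$ yield monotonicity, the coercivity lower bound of (iii), and the vanishing $\beta_\lambda(u^0)=0$ simultaneously is the delicate point on which the whole proof rests.
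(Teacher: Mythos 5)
Your proposal is correct, and its skeleton---rewrite the closed loop system as a penalized equation, establish quasi m-accretivity of $\nu A+B_N$ plus a Lipschitz monotone perturbation, then run energy estimates---is the same as the paper's, which sets $\chi_N u=\nu Au+B_N(u)-\frac{1}{\lambda}(mu-P_K(mu))$ and invokes the same Barbu results. You diverge, however, in the two quantitative steps, and in both places your route is the sharper one. For \eqref{e26}, the paper derives the pointwise inequality $\left(mu_\lambda,Fu_\lambda\right)\geq\frac{1}{\lambda}d_K^2(mu_\lambda)$ but then bounds the left side by Cauchy--Schwarz and Young as $\frac{1}{2}|Fu_\lambda|^2+\frac{1}{2}|u_\lambda|^2$; since the natural bound on $|Fu_\lambda|$ is of order $1/\lambda$, the constant obtained that way is not visibly uniform in $\lambda$ (the paper's own phrasing, ``for all $\lambda>0$ there exists $C>0$,'' reflects this). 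Your argument---keep the penalty term in the integrated energy identity and use the coercivity $\left(\beta_\lambda(u),u\right)\geq\frac{1}{\lambda}d_K^2(mu)$, which follows from $0\in K$ and the projection inequality---delivers the genuinely $\lambda$-uniform constant that the statement of \eqref{e26} requires; this is the standard penalization argument and arguably repairs a defect in the paper's derivation. For the regularity, the paper tests with $Au_\lambda^N$ and absorbs $\left(Fu_\lambda^N,Au_\lambda^N\right)$ by Young's inequality using the $O(1/\lambda)$ bound on $F$, obtaining $\|u_\lambda^N\|\leq C\left(1+\frac{1}{\lambda}\right)$ before removing the cutoff; you instead differentiate in time and test with $u_\lambda'$ (the device used in the paper's proof of Theorem \ref{thm 2}), discarding the penalty by monotonicity of $\beta_\lambda$ and controlling $u_\lambda'(0)$ via $\beta_\lambda(u^0)=0$, which follows from $mu^0\in K$. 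This buys $V$-bounds uniform in $\lambda$, and it is the only place in either argument where the hypothesis $mu^0\in K$ is actually used---the paper states that hypothesis but never invokes it. Both routes share the same formal looseness about differentiating Lipschitz (or multivalued) quantities in time, which your difference-quotient formulation handles at least as carefully as the paper does.
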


\begin{proof}
Let the operator $\chi _N :D(\chi_N) \rightarrow H$ be defined by,
\begin{equation*}
\chi_N u = \nu Au + B_N(u) -\frac{1}{\lambda}(mu - P_K(mu)) . 
\end{equation*}
Clearly, $D(\chi_N)=D(A)$. \\
Define the operator $Fu = -\frac{1}{\lambda}(mu - P_K(mu))$. We can show that $F$ is non-expansive on $H$ i.e. 
\begin{eqnarray*}
|Fu - Fv| \leq |u-v|.
\end{eqnarray*}
Then by \cite{Barbu ns} we get, $\chi_N  + \alpha_N I$ where $\alpha_N >0$ is m-accretive on $H$. The proof will be similar as in \ref{thm 1}. Therefore we get, $u \rightarrow \chi_N u$ is a quasi m-accretive by definition \ref{QM}. So by \cite{Barbu ns} the cauchy problem, 
\begin{equation*}
\frac{du(t)}{dt} + \chi_Nu(t) = f(t), \ \ u(0)=u^0
\end{equation*}
has a unique solution $u_{\lambda}^N \in W^{1,\infty}([0,T],H) \cap L^\infty([0,T],D(A)) \cap C([0,T],V)$. \\
Taking inner product of 
\begin{equation}\label{27}
\frac{du_{\lambda}^N}{dt} + \chi_N u_{\lambda}^N = f, \ \ u(0)=u^0
\end{equation}
with $u^N_{\lambda}$ we get,
$$(\frac{du_{\lambda}^N}{dt},u^{\lambda}_N)  + (\chi_Nu_{\lambda}^N,u_{\lambda}^N) = (f,u_{\lambda}^N).$$ 
$$\mbox{  i.e.  }\frac{d}{dt}|u^N_{\lambda}|^2 + \nu (Au^N_{\lambda},u^N_{\lambda}) + \left(B_N(u^N_\lambda),u^N_{\lambda}\right) + \left(Fu^N_{\lambda},u^N_{\lambda}\right) = (f,u^N_{\lambda}) .$$
Since $\left(Fu^N_{\lambda},u^N_{\lambda}\right) \geq 0$ we get, 
\begin{eqnarray}\label{e42}
&\frac{d}{dt}|u^N_{\lambda}|^2 + \nu \|u^N_{\lambda}\|^2  \leq \frac{1}{2}|f|^2 + \frac{1}{2}|u^N_{\lambda}|^2   
\end{eqnarray}
Similarly taking inner product of \eqref{27} with $Au^N_{\lambda}$ and using 5. of Theorem \ref{prop b} we get,
\begin{eqnarray}\label{e43}
(\frac{du_{\lambda}^N}{dt},Au_{\lambda}^N) + \nu (Au^N_{\lambda},Au^N_{\lambda}) + \left(B_N(u^N_\lambda),Au^N_{\lambda}\right) + \left(Fu^N_{\lambda},Au^N_{\lambda}\right) = (f,Au^N_{\lambda}) \nonumber \\
\frac{d}{dt}\|u_{\lambda}^N\|^2 + \nu |Au_{\lambda}^N|^2 \leq C_\nu{|u_{\lambda}^N|}^2{\|u_{\lambda}^N\|}^2 + \frac{\nu}{6}|Au_{\lambda}^N|^2 + \frac{3}{2\nu}|Fu^N_{\lambda}|^2 + \frac{\nu}{6}|Au^N_{\lambda}|^2  \nonumber \\
+ \frac{3}{2\nu}|f|^2 + \frac{\nu}{6}|Au^N_{\lambda}|^2 \nonumber \\
\frac{d}{dt}\|u^N_{\lambda}\|^2 + \frac{\nu}{2} |Au_{\lambda}^N|^2 \leq C_\nu{|u_{\lambda}^N|}^2{\|u_{\lambda}^N\|}^2 + \frac{3}{2\nu}|Fu^N_{\lambda}|^2 + \frac{3}{2\nu}|f|^2.
\end{eqnarray}
Using Grownwall's lemma and using $f\in W^{1,2}([0,T],H)$, then integrating  \eqref{e42} gives,
$$|u^N_{\lambda}|^2 + \frac{\nu}{2} \int_0^t \|u^N_{\lambda}\|^2 \leq C \ \ \forall t \in [0,T].$$ 
Similarly integrating \eqref{e43} we get,
\begin{align*}
\|u^N_{\lambda}(t)\|^2 &\leq \|u^0\|^2 +C_\nu\int_0^t {|u_{\lambda}^N|}^2{\|u_{\lambda}^N\|}^2  + \int_0^t |Fu^N_{\lambda}(s)|^2  \\
&+  \frac{3}{2\nu}\int_0^t |f(s)|^2 
\end{align*}
Since $f\in W^{1,2}([0,T],H)$, $u^N_{\lambda} \in L^{\infty}([0,T],D(A))$ and  $Fu^N_\lambda$'s are bounded operators, it follows
$$\|u^N_{\lambda}(t)\|^2 \leq C_\nu\int_0^t {|u_{\lambda}^N|}^2{\|u_{\lambda}^N\|}^2 + \frac{C}{\lambda^2} + C ,$$
for all $ N \in \mathbb{N}$, $ \lambda > 0$ and $t \in [0,T]$. \\
Using Grownwall's inequality we get,
$$\|u^N_{\lambda}(t)\|^2 \leq C\left(1+ \frac{1}{\lambda^2}\right) \exp \{\int_0^t {|u_{\lambda}^N|}^2 \} $$
Since, $u^N_{\lambda} \in L^{\infty}([0,T],H)$, 
$$\|u^N_{\lambda}(t)\|^2 \leq C\left(1+ \frac{1}{\lambda} \right)^2 .$$
Therefore, for all $t \in [0,T]$ and $\lambda>0$ we get,
$$\|u^N_{\lambda}(t)\| \leq C\left(1+ \frac{1}{\lambda} \right) $$
So, for $N > C\left(1+ \frac{1}{\lambda} \right)$ we get, $u^N_\lambda = u_\lambda$, which is the solution of 
\begin{equation*}
\frac{du_\lambda(t)}{dt} + \nu Au_\lambda(t) + B_N(u_\lambda(t)) + F(u_\lambda(t))= f(t), \ \ u(0)=u^0, \forall \ t\in [0,T].
\end{equation*}
This proves \eqref{e25}. We are left to show \eqref{e26}. For we show that,
$$\left( mu_\lambda - mP_K(u_\lambda) , mu_\lambda - mP_K(u_\lambda) \right) \geq d_K^2(mu_\lambda),$$ 
which is same as to show that
$$\left( mu_\lambda , mu_\lambda - mP_K(u_\lambda) \right) - \left(mP_K(u_\lambda), mu_\lambda - mP_K(u_\lambda) \right) \geq d_K^2(mu_\lambda).$$ 
Clearly,
$$\left( mu_\lambda , mu_\lambda - mP_K(u_\lambda) \right) \geq d_K^2(mu_\lambda) $$
Since, $Fu_\lambda = \frac{1}{\lambda} \left( mu_\lambda - mP_K(u_\lambda) \right),$
$$\left( mu_\lambda , \lambda Fu_\lambda \right) \geq d_K^2(mu_\lambda)  $$
i.e
$$\left( mu_\lambda , Fu_\lambda \right) \geq \frac{1}{\lambda} d_K^2(mu_\lambda).$$ 
Using Cauchy-Schwartz and Young's inequality we get,
$$\frac{1}{\lambda} d_K^2(mu_\lambda) \leq \frac{|Fu_\lambda|^2}{2} + \frac{|u_\lambda|^2}{2}.$$
Integrating over $[0,T]$ gives,
$$\frac{1}{\lambda} \int_0^T d_K^2(mu_\lambda) \leq \int_0^T \frac{|Fu_\lambda|^2}{2} + \int_0^T \frac{|u_\lambda|^2}{2} .$$
Since, $u_{\lambda} \in L^{\infty}([0,T],H)$, for all $ \lambda > 0$ there exists $C>0$ such that 
$$\frac{1}{\lambda} \int_0^T d_K^2(mu_\lambda) \leq C.$$ 
This proves the theorem.
\end{proof}

Now we illustrate with two examples how we can find flow preserving controllers using theorems proved above. The quantities like enstrophy and helicity  are conserved for fluid flow equations. In \cite{Peter}, it has been shown that shell model of turbulence also preserves these quantities.

The first example is concerned with  enstrophy of a system. We choose convex set $K$ to be an enstrophy ball,  and show that we can find a control which would preserve the flow if the initial value is  assumed to be in $K$. The first theorem is applicable in this case and we explicitly define the feedback control which would ensure that solution does not leave $K$. In the second example, we consider $K$ to be a subset of $H$ for which helicity is bounded by some bound say $\rho$.  Here, we can show that the first theorem can not be applied and we can use the third theorem to ensure that the helicity of the solution remains near $K$ for all time $t$ almost everywhere.

\begin{ex}
\textbf{Enstrophy Invariance:} 
Consider the set $K= \{u\in V;\|u\|=|A^{1/2}u| \leq \rho \}$. We want to find $N_K( u )$ such that if the control belongs to $N_K( u )$, the solution will be in $K$.
\end{ex} 
Clearly, $K$ is closed, convex subset of $H$. We show that using  Theorem \ref{thm 1} we can find a feedback controller so that the solution of \eqref{e4} does not leave $K$.
First we show that $(I+\lambda A)^{-1} K \subset K$. \\
For any $f \in K$ we have to show there exist $y\in K$ such that 
\begin{equation}\label{e28}
(I+\lambda A)y = f .
\end{equation}
Taking inner product of \eqref{e28} with $Ay$ we get, 
\begin{equation*}
(y,Ay) + \lambda(Ay,Ay) = (f,Ay).  
\end{equation*}
Using Young's inequality we get,
\begin{equation*}
|A^{1/2}y|^2 + \lambda |Ay|^2 \leq \frac{1}{2}|A^{1/2}f|^2 + \frac{1}{2} |A^{1/2}y|^2.
\end{equation*}
Therefore we get,
\begin{equation*}
|A^{1/2}y|^2 \leq |A^{1/2}f|^2 \leq \rho.
\end{equation*}
Thus the solution $y$ to \eqref{e28} belongs to $K$. Therefore $(I+\lambda A)^{-1} K \subset K$. We have
\[N_K( u ) := \left\{ w\in H ;
  \begin{cases}
    0      & \quad \mbox{if } |A^{1/2}u| < \rho\\
   \cup  _{ \lambda > 0} \lambda Au  & \quad \mbox{if } |A^{1/2}u| = \rho\\
  \end{cases} \right\}
\] 
Using the theorem \ref{thm 1} we get the feedback controller 
\[g(t) := 
  \begin{cases}
    0      & \quad \mbox{if } |A^{1/2}u| < \rho\\
    Z(u) & \quad \mbox{if } |A^{1/2}u| = \rho\\
  \end{cases} 
\]
where $Z(u) = -\frac{Au}{|Au|^2} (f(t) -\nu Au(t) - B(u(t)), Au(t)) $ such that the corresponding solution $u(t) \in K \ \forall t\in [0,T]$. \\

\begin{ex}
\textbf{Helicity Invariance: }Let's consider $K=\{y \in D(A^{1/4}) \ ; \sum k_n|y_n|^2 = |A^{1/4}y|^2 \leq \rho^2 \}$.
\end{ex}
Clearly, $K$ is closed convex subset of $H$. We can show that $K$ is not invariant under $(I+\lambda A)^{-1}$. So we can not use the Theorem \ref{thm 1}. We are going to apply Theorem \ref{thm 3}. \\
Let $L: V_{1/2} \rightarrow H$ be the operator defined by $ L(y)=A^{1/4}y$. Then the functional $N(y) = |L(y)|^2$ is continuous on $V_{1/2}.$ \\
So using Theorem 5.3 we get a sequence of feedback controller $g_\lambda \in L^2([0,T],H)$ such that $g_{\lambda} = \frac{1}{\lambda}(u_{\lambda} - P_K (u_\lambda))$ and $$ \lim_{\lambda \rightarrow 0} \int^T_0 d_K^2(u_{\lambda}(t)) dt = 0.$$ 
Now to write $g_{\lambda}$, we need to find projection of $u_{\lambda}$ on $K$. Let us denote $P_K(u)=z$ for any $u\in V_{1/2}$. We will show that $z$ solves
\begin{eqnarray}\label{e44}
z+ 4\lambda L(z)A^{1/2}z = u , \ \ z(0)=z^0
\end{eqnarray}
where $z^0$ is the projection of $u^0$ on $K$. To prove that $z$ solves \eqref{e44} we use Lagrange multiplier method. Define the minimization problem,
\begin{equation*}
\begin{aligned}
& \underset{x \in K}{\text{minimize}}
& & d(x,u) \\
& \text{subject to}
& & N(x) \leq \rho^2.
\end{aligned}
\end{equation*}
Let the solution of the minimization problem be $z$. Then $z$ will satisfy the following Lagrange multiplier equation.
\begin{equation}\label{29}
z-u + \lambda N'(z) = 0.
\end{equation}
Now to find $N'(z)$, it is enough to find $L'(z)$ as $N'(z) = 2L(z)L'(z).$
\begin{align*}
L'(z)h &= lim_{\lambda \rightarrow 0} \frac{L(z+ \lambda h) - L(z)}{\lambda} \\
&=  lim_{\lambda \rightarrow 0} \frac{\sum k_n (z_n + \lambda h_n)^2 - \sum k_n z_n^2}{\lambda} \\
&= lim_{\lambda \rightarrow 0} \frac{\sum k_n 2 \lambda z_n h_n + \sum k_n {\lambda}^2 h_n^2}{\lambda} \\
&= 2 \sum k_n z_n h_n \\
&= 2(A^{1/4}z,A^{1/4}h)
\end{align*}
Since, $D(A^{1/2})$ is dense in $D(A^{1/4})$ we can extend the definition of $A^{1/2}:V_{1/2} \rightarrow V_{-1/2}$ in such a way that
$$ \langle A^{1/2}u,v \rangle = (A^{1/4}u,A^{1/4}v).$$
So we get, $$L'(z) = A^{1/2}z.$$
Therefore, by \eqref{29}  $z$ solves \eqref{e44} i.e. $$z+ 4\lambda L(z)A^{1/2}z = u .$$

\section{Conclusion}
In this work we have studied  two optimal control problems for equations of sabra shell model of turbulence with the control acting as a forcing term. We have studied two cost functionals, one aims to reduce turbulence in the flow and the other one  is to find an optimal control which can take the flow to the desired state.  In both cases, with the help of the adjoint equation we have shown that if the optimal pair which minimizes the cost functional exists then the optimal control can be characterized by  using the solution of appropriate the adjoint equation. However, converse of our theorem does not hold true. That is the  control designed by above method need not give the optimal solution for the minimization of cost functional. This is expected because the adjoint equation is written for the linearized equations of sabra shell model. Thus for nonlinear equation the control which is designed via solution of the linearized adjoint equation would not give the optimal solution. 

In the second part of our work we have looked at another control problem of  finding feedback controller which would  preserve certain quantities in the flow. This is mainly useful because the shell models have natural invariants like enstrophy and helicity. We have proved three  different theorems about constructing feedback controllers. The first theorem is proved under strong assumption that the constrained set $K$ is invariant under $(I+\lambda A)^{-1}$. In this case we get the  control which  will be in $H$-valued normal cone to $K$. In the second theorem the  assumption is relaxed, but as a result, we get the control in the $V$ valued normal cone to $K$ which is a  weaker space than before. In the third theorem, we  have proved that we can always find a sequence of controls such that the sequence of corresponding solutions will remain close to $K$. At the end, we have discussed two example where these theorems have been applied.

As noted earlier  control problems for shell models of turbulence are not well studied in literature in spite of potential applications. We further plan to study  certain controllability problems related to sabra shell model of turbulence. Internal stabilization and H infinity control problem for the shell model also seem promising avenues to explore.

\end{document}